\documentclass[12pt]{article}

\usepackage{amssymb,url,color}
\usepackage{hyperref}
\newtheorem{theorem}{Theorem}[section]
\newtheorem{prop}[theorem]{Proposition}
\newtheorem{lemma}[theorem]{Lemma}
\newtheorem{cor}[theorem]{Corollary}

\newenvironment{proof}{\prepf\rm}{\endprepf}
\newcommand{\qed}{\hfill$\Box$}

\newenvironment{remark}{\prerk\rm}{\endprerk}

\newenvironment{problem}{\preprob\rm}{\endpreprob}

\newcommand{\Aut}{\mathop{\mathrm{Aut}}}
\newcommand{\Inn}{\mathop{\mathrm{Inn}}}
\newcommand{\Out}{\mathop{\mathrm{Out}}}
\newcommand{\GF}{\mathop{\mathrm{GF}}}
\newcommand{\GL}{\mathop{\mathrm{GL}}}

\begin{document}

\title{Integrals of groups}
\author{Jo\~ao Ara\'ujo\footnote{Universidade Aberta, 
R. Escola Politecnica 147, 1269-001, Lisboa, Portugal and
CEMAT-Ci\^encias, Faculdade de Ci\^encias, Universidade de Lisboa, 1749-016, Portugal
\texttt{\href{mailto:joao.araujo@uab.pt}{joao.araujo@uab.pt}}}, 
Peter J. Cameron\footnote{School of Mathematics and Statistics, University
of St.~Andrews, UK and CEMAT-Ci\^encias, Faculdade de Ci\^encias, Universidade de Lisboa, 1749-016, Portugal, \texttt{\href{mailto:pjc20@st-andrews.ac.uk}{pjc20@st-andrews.ac.uk}}},
Carlo Casolo\footnote{Dipartimento di Matematica e Informatica ``U. Dini'', Universit\`a  di Firenze, Italia, \texttt{\href{mailto:carlo.casolo@unifi.it}{carlo.casolo@unifi.it}}},
and 
Francesco Matucci\footnote{Instituto de Matem\'{a}tica, Estat\'{i}stica e Computa\c c\~{a}o Cient\'{i}fica, Universidade Estadual de Campinas (UNICAMP), S\~{a}o Paulo, Brasil, \texttt{
\href{mailto:francesco@ime.unicamp.br}{francesco@ime.unicamp.br}}}}
\date{}
\maketitle

\begin{abstract}
An \emph{integral} of a group $G$ is a group $H$ whose derived group
(commutator subgroup) is isomorphic to $G$. This paper discusses integrals of
groups, and in particular questions about which groups have integrals and how
big or small those integrals can be. Our main results are:
\begin{itemize}\itemsep0pt
\item If a finite group has an integral, then it has a finite integral.
\item A precise characterization of the set of natural numbers $n$ for which
every group of order $n$ is integrable: these are the cubefree numbers $n$
which do not have prime divisors $p$ and $q$ with $q\mid p-1$.
\item An abelian group of order $n$ has an integral of order at most
$n^{1+o(1)}$, but may fail to have an integral of order bounded
by $cn$ for constant $c$.
\item A finite group can be integrated $n$ times (in the class of finite groups)
if and only if it is the central product of an abelian group and a perfect
group.
\end{itemize}
There are many other results on such topics as centreless groups, groups with
composition length~$2$, and infinite groups.
We also include a number of open problems.
\end{abstract}

\section{Introduction}

Let $G$ be a group. An \emph{integral} (in the sense of \emph{antiderivative})
of $G$ is any group $H$ such that $H' = G$.
Several authors refer to groups with an integral as \emph{$C$-groups}
or \emph{commutator-realizable groups}.

Questions about integrals of groups were first raised by Bernhard Neumann
in~\cite{neumann1}. It is surprising how little progress has been achieved
on this topic since Neumann's paper. Perhaps the explanation is that the
proofs rely as much on intricate constructions as on long arguments (though
the proofs of Theorems \ref{t:abelian} and \ref{t:orders} are substantial).

Some groups have no integral, as observed in Group Properties \cite{not-commutator-realizable}:

\begin{quote}
It is not true that every group can be realized as the derived subgroup
of another group -- for instance, the ``characteristically metacyclic and
commutator-realizable implies abelian'' statement tells us that a group whose
first two abelianizations are cyclic, but whose second derived subgroup is not
trivial, cannot arise as a derived subgroup.
\end{quote}

Every abelian group is integrable; so the set of orders of non-integrable
groups is a subset of the set of orders of non-abelian groups. A major
result, which occupies Section~\ref{s:orders}, is an exact description of
this set, more conveniently expressed in terms of its complement: we show that
every group of order $n$ is integrable if and only if $n$ is cube-free and
does not have prime divisors $p$ and $q$ such that $q\mid p-1$. This implies,
in particular, that for every even integer greater than $4$ there is a
non-integrable group of order $n$.

We will see in Section \ref{s:prelim} that if a finite group has an integral,
then it has a finite integral; so it makes sense to ask for a good upper bound
for the smallest integral of a finite integrable group. In order to have a good
computational test for integrability, it is useful to have such a bound. 
We conjecture that an integrable group of order $n$ has an integral of order
at most $n^3$; this is best possible, as shown by the cyclic group of
order~$2$, whose smallest integrals are the dihedral and quaternion groups of
order~$8$. We have been unable to find a closed formula for the smallest
integral of an abelian group, but we give a number of constructions in
Section~\ref{s:abel} for small integrals of abelian groups. From these
constructions it follows that an abelian group of order $n$ has an integral
of order $n^{1+o(1)}$. In the other direction, we show that
integrals of abelian groups of order~$n$ do not have order bounded by $cn$ for
any constant $c$. We also give a weaker bound $n^{c\log n}$ for the order
of the smallest integral of a centreless group of order $n$.

Also, there are infinite abelian groups $A$ which do not have
an integral $G$ with $|G:A|$ finite.

Any perfect group, and in particular any non-abelian simple group, is its own
integral.  Motivated by this, we analyse non-perfect groups with composition
length~$2$, and decide whether or not such groups are integrable (up to a
specific question about outer automorphism groups of simple groups). This
question leads to a more general topic: When does a group $G$ have an integral
inside some ``universal'' group $U$ containing $G$?

By analogy with $C^\infty$ functions in analysis, we are led to the question:
does there exist a  group which can be integrated infinitely often? Clearly
any perfect group has this property, since it is its own integral. 
Bernhard Neumann~\cite{neumann1} showed that there is no such
sequence where all the groups are finite and the sequence increases strictly;
but we give in Subsection~\ref{ss:infint} two sequences satisfying slightly
weaker conditions. We also examine finite groups which can be integrated $n$
times for every positive integer $n$. These turn out to be central products
of an abelian group and a perfect group.

We hope that the results about integrals will inspire the research on similar inverse problems in group theory, some of which will be discussed in a following
paper. 

\medskip

We are grateful to Alireza Abdollahi, Lars Jaffke, Michael Kinyon, and Avinoam Mann for
valuable comments. In particular, Alireza Abdollahi informed us of his paper
\cite{abdollahi}; although we have not seen the paper, the author kindly
communicated to us the main results. He also drew our attention to a comment
he had made on MathOverflow~\cite{overflow-integral}, and to the paper by
Filom and Miraftab~\cite{fm}. This paper was published in 2017, but the work
of these authors was completely independent of ours. Finally, Abdollahi is
(as far as we know) the first author to use the term ``integral'' in the
sense used here.

\section{Preliminaries}
\label{s:prelim}

In this section we gather some straightforward observations and prove that integrable finite groups have finite integrals. First we note
that, if a group has an integral, then it has infinitely many:

\begin{lemma}
Let $G$ be a group, let $H$ be an integral for $G$ and let $A$ be an abelian
group. Then $H\times A$ is an integral for $G$.
\label{l:const}
\end{lemma}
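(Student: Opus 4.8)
The plan is to compute the derived subgroup of $H \times A$ directly and show it equals $G$. First I would recall the standard fact that for any two groups $K$ and $L$, the commutator subgroup of the direct product satisfies $(K \times L)' = K' \times L'$; this is immediate because commutators of elements $(k_1, l_1), (k_2, l_2)$ are computed componentwise, so $[(k_1,l_1),(k_2,l_2)] = ([k_1,k_2],[l_1,l_2])$, and the subgroup generated by all such elements is exactly $K' \times L'$.

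Applying this with $K = H$ and $L = A$, we get $(H \times A)' = H' \times A'$. Since $A$ is abelian, $A' = 1$, so $(H \times A)' = H' \times 1 \cong H'$. By hypothesis $H$ is an integral of $G$, meaning $H' \cong G$, and therefore $(H \times A)' \cong G$, which is exactly the statement that $H \times A$ is an integral of $G$.

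There is essentially no obstacle here: the only thing to be slightly careful about is the precise meaning of ``integral'' (i.e. whether $H' = G$ literally or $H' \cong G$ up to isomorphism), but either reading works, since the isomorphism $H' \to G$ extends trivially to $H' \times 1 \to G$. I would state the direct-product commutator identity as a one-line observation rather than proving it, and then the lemma follows in two lines. One could also remark in passing that this already yields the promised ``infinitely many integrals'' consequence, by taking $A$ to range over abelian groups of unbounded order.
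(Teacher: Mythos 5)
Your proof is correct and is precisely the standard argument the paper has in mind: the paper gives no proof at all, declaring the lemma ``self-evident,'' and your computation $(H\times A)'=H'\times A'=G\times 1\cong G$ is exactly the obvious verification being omitted. No issues.
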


This is self-evident and already known (but we do not know where it was first
observed). It is the analogue of adding a ``constant of integration''.

\medskip

\begin{theorem}
Let $G$ be a finite group. If $G$ has an integral, then it has an integral
which is a finite group.
\label{t:finite}
\end{theorem}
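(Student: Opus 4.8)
The plan is to begin with an arbitrary integral $H$ of $G$, shrink it to a finitely generated subgroup that is still an integral of $G$, and then factor out a suitable infinite central subgroup without changing the isomorphism type of the derived subgroup.

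\emph{Reduction to $H$ finitely generated.} Since $G=H'$ is finite, it is generated by finitely many commutators $[a_1,b_1],\dots,[a_m,b_m]$ with $a_i,b_i\in H$. Put $H_0=\langle a_1,b_1,\dots,a_m,b_m\rangle$. Then $G=\langle[a_i,b_i]\rangle\le H_0'\le H'=G$, so $H_0'=G$ and $H_0$ is finitely generated; replacing $H$ by $H_0$ we may assume $H$ is finitely generated.

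\emph{The centralizer $C=C_H(G)$.} Conjugation gives a homomorphism $H\to\Aut(G)$ (as $G\trianglelefteq H$) with kernel $C$, so $H/C$ embeds in the finite group $\Aut(G)$; hence $C$ has finite index in $H$ and so is finitely generated. Also $[C,C]\le H'=G$ while $C$ centralizes $G$, so $[C,[C,C]]=1$: thus $C$ is nilpotent of class at most $2$ with finite derived subgroup $C'\le G$. The crux of the argument is then to show that $C/Z(C)$ is finite: since $C$ has class $\le 2$, the commutator induces a well-defined biadditive non-degenerate pairing $C/Z(C)\times C/Z(C)\to C'$, giving an embedding $C/Z(C)\hookrightarrow\mathrm{Hom}(C/Z(C),C')$; but $C/Z(C)$ is finitely generated abelian and $C'$ is finite, so the target group is finite, forcing $C/Z(C)$ to be finite. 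Consequently $H/Z(C)$ is finite.

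\emph{Killing the central part.} Set $A=Z(C)$, a finitely generated abelian group; it is characteristic in $C$, and since $C=C_H(H')$ is characteristic in $H$, also $A$ (hence suitable characteristic subgroups of $A$) are normal in $H$. Now $A\cap G$ is finite, so it lies in the torsion subgroup $T$ of $A$; the subgroup $N=A^{\,|T|}$ is characteristic in $A$, of finite index, and torsion-free, so $N\cap G\le N\cap T=1$. Then $H/N$ is finite (both $H/A$ and $A/N$ are), and $(H/N)'=H'N/N=GN/N\cong G/(G\cap N)=G$, so $H/N$ is a finite integral of $G$.

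I expect the main obstacle to be the key step that the centralizer of the derived subgroup is centre-by-finite; the pairing argument there is where the finiteness of $G$ is really used, and it is also why the reduction to a finitely generated $H$ cannot be skipped (otherwise $C$, and $C/Z(C)$, need not be finitely generated and the embedding into $\mathrm{Hom}(C/Z(C),C')$ gives nothing). The remaining steps are routine structure theory of finitely generated abelian groups and bookkeeping with characteristic subgroups.
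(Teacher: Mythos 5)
Your proof is correct, and it follows the same overall skeleton as the paper's (pass to a finitely generated integral; locate a finite-index, finitely generated abelian normal subgroup; quotient out a torsion-free finite-index piece of it, which meets $G$ trivially), but the key middle step is genuinely different. The paper works with $H$ itself: every conjugacy class lies in a coset of $H'=G$, so $H$ is a BFC-group with classes of size at most $|G|$, and for finitely generated groups this immediately gives $Z(H)=\bigcap_i C_H(x_i)$ of finite index; it then splits $Z(H)=A\times B$ with $B$ torsion-free and factors out $B$. You instead pass to $C=C_H(G)$, observe it has finite index (via the action on the finite group $\Aut(G)$) and is nilpotent of class at most $2$ with $C'\le G\cap C=Z(G)$ finite, and then use the commutator pairing $C/Z(C)\times C/Z(C)\to C'$ to embed $C/Z(C)$ into the finite group $\mathrm{Hom}(C/Z(C),C')$. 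Both middle steps are instances of the general principle that a finitely generated group with finite derived subgroup is centre-by-finite; the paper's BFC argument is a one-line application of that principle directly to $H$, while yours re-proves the special case needed by hand after first forcing the class-$2$ structure, at the cost of tracking that $C$, $Z(C)$ and $A^{|T|}$ are normal in $H$. Your version is slightly longer but entirely self-contained, and all the details (finite generation of $C$ by Schreier, finiteness of $\mathrm{Hom}(C/Z(C),C')$, torsion-freeness of $A^{|T|}$, and the final isomorphism $(H/N)'\cong G$) check out.
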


\begin{proof}
Let $H$ be an integral of $G$. First, we reduce to the case where $H$ is
finitely generated.

Since $G=H'$, there are finitely many commutators $[h_1,k_1]$, \dots,
$[h_r,k_r]$ which generate $G$, with $h_i,k_i\in H$. Now it is clear that
the subgroup of $H$ generated by $h_1,\ldots,h_r,k_1,\ldots,k_r$ is an
integral of $G$, and is finitely generated. So, without loss, $H$ is finitely
generated.

Any conjugacy class in $H$ is contained in a coset of $H'=G$. (For 
$h,x\in H$, we have $Gx^{-1}hxh^{-1}=G$, so $Gx^{-1}hx=Gh$.) Thus $H$ is a
BFC-group (the conjugacy classes are finite of bounded order). While not
every BFC-group has centre of finite index, this is true for finitely 
generated BFC-groups. For let $H=\langle x_1,\ldots,x_k\rangle$. By 
assumption, $C_H(x_i)$ has finite index in $H$ (at most $|G|$); so
$Z(H)=\bigcap_{i=1}^kC_H(x_i)$ has finite index in $H$.

In consequence, $Z(H)$ is finitely generated abelian. So it has the form
$A\times B$, where $A$ is finite and $B$ is finitely generated torsion-free.
Since $B \le Z(H)$, it is a normal subgroup of $H$; since it is torsion-free,
$B\cap G=1$. Thus $G$ embeds in the finite group $\overline{H}=H/B$, and
$(\overline{H})'=G$.\qed
\end{proof}

\begin{remark}
In the paper~\cite{fm}, this theorem is proved under stronger hypotheses. In
addition, there are results in the paper which can be improved using our
theorem. For example, Theorem 20 asserts that, if a non-abelian $2$-group $G$ 
has cyclic centre and automorphism group a $2$-group, then any integral of $G$
is infinite; we can now conclude that such a group is not integrable.
\end{remark}

It is possible for a group to have infinitely many integrals with no abelian
direct factors
(see \cite{nm-extra}).
For example, every extraspecial $p$-group is an
integral of the cyclic group of prime order $p$. (A $p$-group $P$ is
\emph{special} if $P'=\Phi(P)=Z(P)$, and is \emph{extraspecial} if this
subgroup is cyclic of prime order.)

\begin{prop}
There exists a function $f$ defined on the natural numbers such that, if
$G$ is an integrable group of order $n$, then $G$ has an integral
of order at most $f(n)$.
\end{prop}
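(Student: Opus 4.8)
The plan is to deduce this directly from Theorem~\ref{t:finite} together with the elementary fact that, for each natural number $n$, there are only finitely many isomorphism classes of groups of order $n$. First I would fix $n$ and list the groups of order $n$ up to isomorphism, say $G_1,\dots,G_t$. For each $G_i$ which is integrable, Theorem~\ref{t:finite} guarantees that $G_i$ has at least one integral which is a finite group; the set of orders of finite integrals of $G_i$ is then a nonempty set of natural numbers, so it has a least element, which I will call $m(G_i)$. If $G_i$ is not integrable, set $m(G_i)=1$, say; its value is irrelevant.

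Next I would define $f(n)=\max\{m(G_1),\dots,m(G_t)\}$. As a maximum of finitely many natural numbers this is a well-defined natural number, so $f$ is a genuine function on the natural numbers. Now if $G$ is any integrable group of order $n$, then $G\cong G_i$ for some $i$, and by construction $G_i$ --- hence $G$ --- has a finite integral of order $m(G_i)\le f(n)$. This is exactly the statement of the proposition.

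The point worth stressing is that essentially all the content is already in Theorem~\ref{t:finite}: once every integrable finite group is known to possess \emph{some} finite integral, passing to a bound that depends only on the order is immediate, precisely because there are only finitely many groups of each order. There is therefore no real obstacle in this particular argument; the genuinely hard and interesting problem --- which the remainder of the paper addresses, and which the conjectured bound $f(n)\le n^{3}$ makes precise --- is to replace this abstract existence statement by an explicit and slowly growing function $f$.
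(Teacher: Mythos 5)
Your proof is correct and follows essentially the same approach as the paper: define $f(n)$ as the maximum, over the finitely many isomorphism classes of integrable groups of order $n$, of the least order of a finite integral, with Theorem~\ref{t:finite} guaranteeing that each such minimum exists.
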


\begin{proof}
We can take $f(n)$ to be the maximum, over all integrable groups $G$ of 
order~$n$, of the minimum order of an integral of~$G$. (We will see at the
end of this section that, for every $n$, there is an integrable group of
order~$n$, so this function is well-defined.)
\qed\end{proof}

An algorithm for deciding whether a group $G$ of order $n$ is integrable could
run as follows: examine all groups $H$ of order at most $f(n)$ (and divisible
by $n$); return true if such an $H$ is found with $H'=G$, and false otherwise.
However, in the absence of a decent bound for $f(n)$, this algorithm is
worthless. It would be useful to have a good bound, and we pose the question
whether $f(n)<n^3$ for all natural numbers $n$.
(Note that the smallest integrals of the cyclic group of order~$2$ are the
dihedral and quaternion groups of order~$8$.)

In 
Section \ref{s:gs} 
we will prove a weaker bound for centreless groups.

\medskip

Finally, we consider abelian groups. Guralnick~\cite{gural1}
showed that, if $A$ is an abelian group of order $n$, then the group
$A\wr C_2$, of order $2n^2$, is an integral of $A$. In Section~\ref{s:abel}
we will find much smaller integrals of abelian groups, and pose the question
of finding the smallest.

\section{Some examples of integrability}
\label{s:exs}

In this section we give some examples of integrable and non-integrable groups.
These are of some interest in their own right, and will also be used in the
discussion of orders of non-integrable groups. We also give some results
concerning direct products.
We begin by recalling known results in the literature:
\begin{enumerate}
\item Abelian groups are integrable \cite{gural1}.
\item Dihedral groups are non-integrable. (This is stated by
Neumann~\cite{neumann1} with a reference to Zassenhaus~\cite{zassenhaus},
which we have not been able to check; an explicit proof, also showing that
quasi-dihedral and generalized quaternion groups are non-integrable, is in \cite[Corollary 18]{fm}.)
\item Symmetric groups $S_n$ (for $n \ge 3$) are non-integrable. (This is folklore. The first reference we found in print is in \cite{neumann1}, where it is
stated without proof; the earliest proof we found in print is \cite[Corollary 15 and Theorem 16]{fm}.)
\item Some matrix groups are integrable \cite{miller1}. Miller deals with all normal subgroups of the general linear groups $\mathrm{GL}_n(K)$, the unitary groups $\mathrm{U}_n(K)$, and the orthogonal groups $\mathrm{O}_n(K)$ for $K$ a field of characteristic different from $2$.
\end{enumerate}
We will recover some of the results above as consequences of our own constructions.

The next result is essentially the same as \cite[Theorem 17]{fm}.

\begin{prop}
Let $G$ be a group with a characteristic cyclic subgroup $C$ which is not
contained in $Z(G)$. Then $G$ has no integral.
\label{p:cc}
\end{prop}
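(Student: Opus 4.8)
The plan is a short contradiction argument exploiting the fact that the automorphism group of a cyclic group is abelian. Suppose $G$ had an integral $H$, so that $H'=G$. Since the derived subgroup is normal (indeed characteristic) in $H$, we have $G\trianglelefteq H$; and since $C$ is characteristic in $G$, the standard fact that a characteristic subgroup of a normal subgroup is normal gives $C\trianglelefteq H$. Thus $H$ acts on $C$ by conjugation, yielding a homomorphism $\varphi\colon H\to\Aut(C)$.

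Next I would invoke that $C$ is cyclic, so $\Aut(C)$ is abelian (whether $C$ is finite cyclic, where $\Aut(C)$ is the unit group of $\mathbb{Z}/|C|\mathbb{Z}$, or infinite cyclic, where $\Aut(C)$ has order $2$). Therefore $\varphi$ factors through the abelianization of $H$, i.e.\ $H'=G\le\ker\varphi$. Unpacking this, $G$ centralizes $C$, which says precisely that $C\le Z(G)$, contradicting the hypothesis that $C\not\le Z(G)$. Hence no such $H$ exists.

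There is essentially no serious obstacle here: the only points needing care are the transitivity-type fact (characteristic in normal implies normal), which is routine, and the observation that $\Aut(C)$ is abelian in all cases, including the infinite cyclic one. Everything else is a direct chain of implications, so the write-up should be brief.
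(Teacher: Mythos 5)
Your proof is correct and follows essentially the same route as the paper's: $C$ is normal in the putative integral $H$, the conjugation action gives a map $H\to\Aut(C)$ with abelian image, so $G=H'$ centralizes $C$, contradicting $C\not\le Z(G)$. Your explicit note that $\Aut(C)$ is abelian even when $C$ is infinite cyclic is a small but welcome addition the paper leaves implicit.
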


\begin{proof}
Suppose for a contradiction that $H'=G$. Since $C$ is characteristic in $G$,
it is normal in $H$, and $H$ (acting by conjugation) induces a group of
automorphisms of $C$. The automorphism group of a cyclic group is abelian,
and so $G=H'$ acts trivially on $C$, and $C\le Z(G)$, a contradiction.\qed
\end{proof}

\begin{cor}
\label{thm:dihedral-no-integral}
If $n$ is even and $n>4$, the dihedral group of order $n$ is non-integrable.
\label{c:dihedral}
\end{cor}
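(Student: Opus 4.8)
The plan is to deduce Corollary~\ref{c:dihedral} from Proposition~\ref{p:cc} by exhibiting, inside the dihedral group $D_n$ of order $n=2m$ with $m>2$, a characteristic cyclic subgroup that is not central. The natural candidate is the cyclic subgroup $C=\langle r\rangle$ of rotations, of order $m$. First I would argue that $C$ is characteristic in $D_n$: when $m>2$ the subgroup $C$ is the unique cyclic subgroup of order $m$, or more robustly, it can be singled out as (for instance) the set of elements having at least $m$ square roots, or simply as the unique subgroup of index~$2$ that is cyclic; any such intrinsic description is preserved by every automorphism. (The case $m>2$ is exactly what excludes the Klein four-group $D_4$, where the rotation subgroup is not characteristic.)

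Next I would check that $C$ is not contained in the centre of $D_n$. Since $n>4$, we have $m=n/2\ge 3$, so $C$ contains an element $r$ of order $\ge 3$; the reflection $s$ satisfies $srs^{-1}=r^{-1}\ne r$, so $r\notin Z(D_n)$, and hence $C\not\le Z(D_n)$. With these two facts in hand, Proposition~\ref{p:cc} applies directly and yields that $D_n$ has no integral, which is the assertion of the corollary.

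The only real point requiring care — the ``main obstacle'', though it is a mild one — is the verification that $C$ is genuinely characteristic, since for very small $m$ one must be sure no exotic automorphism moves it; this is precisely why the hypothesis $n>4$ (equivalently $m\ge 3$) is needed, and I would handle it by noting that for $m\ge 3$ the rotation subgroup is the unique cyclic subgroup of order $m$ in $D_n$ (any element outside $C$ is a reflection, of order $2<m$, so generates no such subgroup), and a subgroup that is unique of its isomorphism type and order is automatically characteristic. Everything else is immediate from Proposition~\ref{p:cc}.
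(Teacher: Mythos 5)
Your proof is correct and follows essentially the same route as the paper: both invoke Proposition~\ref{p:cc} after observing that the rotation subgroup of order $n/2$ is characteristic (because every element outside it has order $2$, so it is the unique cyclic subgroup of its order once $n/2>2$) and non-central (because $n>4$). The parenthetical alternative characterisations you offer (e.g.\ ``elements with at least $m$ square roots'') are unnecessary and not all accurate, but your main argument is exactly the paper's.
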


\begin{proof}
The cyclic subgroup of order $n/2$ is characteristic (since all elements
outside it have order~$2$) and non-central (since $n>4$).\qed
\end{proof}

If $n=pq$, where $p$ and $q$ are primes, then a non-abelian group of order
$n$ exists if and only if $q\mid p-1$.

\begin{cor}
Let $p$ and $q$ be primes and $q\mid p-1$. Then the non-abelian group of
order $pq$ is centreless and non-integrable.
\label{c:pq}
\end{cor}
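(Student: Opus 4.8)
The plan is to apply Proposition~\ref{p:cc} to the non-abelian group $G$ of order $pq$. First I would recall the structure of such a group: since $q \mid p-1$, there is (up to isomorphism) a unique non-abelian group $G$ of order $pq$, which is a semidirect product $C_p \rtimes C_q$ with the $C_q$-action on $C_p$ being faithful. The Sylow $p$-subgroup $P \cong C_p$ is normal (it is the unique subgroup of order $p$, being of index equal to the smallest prime divisor $q$, or alternatively by counting Sylow subgroups), hence characteristic. Since $P$ is cyclic and characteristic in $G$, Proposition~\ref{p:cc} applies as soon as we check that $P \not\le Z(G)$: if $P$ were central, then $G/Z(G)$ would be a quotient of $C_q$, hence cyclic, forcing $G$ abelian, a contradiction. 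Therefore $G$ has no integral.

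For the statement that $G$ is centreless, I would argue directly: $Z(G)$ is a normal subgroup, so $|Z(G)| \in \{1, p, q, pq\}$. It cannot be $pq$ (else $G$ abelian) and cannot be $p$ (just shown). If $|Z(G)| = q$, then $Z(G)$ would be a normal Sylow $q$-subgroup, so $G = P \times Z(G)$ would be a direct product of cyclic groups of coprime order, hence cyclic — again contradicting non-abelianness. Hence $|Z(G)| = 1$.

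I do not expect any real obstacle here: both parts are short and follow from elementary Sylow theory together with the already-established Proposition~\ref{p:cc}. The only point requiring a little care is confirming that $P$ is normal (equivalently characteristic, since it is the unique Sylow $p$-subgroup) and that $P$ is non-central; both are immediate from the faithfulness of the defining action, or from the observation that a central normal subgroup meeting every element's centralizer would collapse the non-abelian structure. One could alternatively invoke Corollary~\ref{c:dihedral} in the special case $p=2$ (where $G$ is dihedral of order $2q$), but the uniform Sylow argument covers all cases at once and is cleaner.
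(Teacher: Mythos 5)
Your proof is correct and follows essentially the route the paper intends: the corollary is stated without an explicit proof immediately after Proposition~\ref{p:cc}, and the intended argument is exactly yours --- the Sylow $p$-subgroup is the unique subgroup of order $p$, hence characteristic and cyclic, and it is non-central because $G/Z(G)$ cyclic would force $G$ abelian; your Sylow-counting argument for $Z(G)=1$ is likewise fine. One tiny slip in your closing aside: the dihedral special case is $q=2$ (giving the dihedral group of order $2p$, covered by Corollary~\ref{c:dihedral}), not $p=2$ --- since $q\mid p-1$ forces $q<p$, the case $p=2$ cannot occur at all.
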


There are two non-abelian groups of order $p^3$. For $p=2$, these are the
quaternion group (which has an integral, namely $\mathrm{SL}(2,3)$), and the
dihedral group (which does not have an integral).

The pattern is similar for odd $p$. We prove the following:

\begin{theorem}
Of the two non-abelian groups of order $p^3$, where $p$ is an odd prime, the
group of exponent $p$ has an integral, while the group of exponent $p^2$
does not.
\label{t:primecubed}
\end{theorem}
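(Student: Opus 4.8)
The plan is to treat the two cases separately, using Proposition~\ref{p:cc} for the negative direction and an explicit construction for the positive one. For the group $M$ of exponent $p^2$ (the modular group of order $p^3$, generated by $a$ of order $p^2$ and $b$ of order $p$ with $a^b = a^{1+p}$), I would show it has a characteristic cyclic subgroup not contained in the centre. The cyclic subgroup $\langle a\rangle$ of order $p^2$ has index $p$; the obstacle is to argue it is characteristic. The cleanest route is to count elements of order $p^2$: in $M$ these all lie in $\langle a\rangle$ (every element is $a^ib^j$, and one checks $(a^ib^j)^p$ using the commutator relations, showing order $p^2$ forces $p\nmid i$, hence $a^ib^j \in \langle a\rangle$ when it has order $p^2$ -- actually more care is needed, so I would instead note that $\langle a\rangle$ consists precisely of the elements lying in some cyclic subgroup of order $p^2$, or use that $M$ has a unique subgroup isomorphic to $C_{p^2}\times$ nothing, i.e. a unique abelian subgroup of index $p$ that is cyclic). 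Once $\langle a\rangle$ is characteristic and visibly non-central (since $a^b \ne a$), Proposition~\ref{p:cc} applies and $M$ is non-integrable.

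For the group $E$ of exponent $p$ (the Heisenberg group over $\mathbb{F}_p$, equivalently the extraspecial group $p^{1+2}_+$), the plan is to exhibit an explicit finite integral. The natural candidate is a group built from the action of $\mathrm{SL}(2,p)$ or a suitable subgroup on $E$: one wants $H$ with $H' = E$. I would look for $H$ as an extension of $E$ by a group $Q$ acting so that the commutators generate all of $E$; since $E/Z(E) \cong \mathbb{F}_p^2$ with $Z(E) \cong \mathbb{F}_p$ carrying the symplectic form, taking $Q$ to act via a subgroup of $\mathrm{Sp}(2,p) = \mathrm{SL}(2,p)$ together with translations should produce commutators filling $E$. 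Concretely, a promising construction is the semidirect product of $E$ with a cyclic group of order $p$ (or with $\mathbb{F}_p^2$ acting by ``translations'' in the Heisenberg sense), arranged so that $[H,H]$ picks up both a full set of coset representatives mod $Z(E)$ and the central element. I expect the paper uses something like $H = $ (unitriangular matrices of a slightly larger size) or a quotient of a free-nilpotent-type group; an alternative clean choice is to verify that the group of upper unitriangular $4\times 4$ matrices over $\mathbb{F}_p$, or a central quotient thereof, has derived subgroup isomorphic to $E$.

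The main obstacle is the positive direction: producing the integral of $E$ and verifying that its derived subgroup is \emph{exactly} $E$, not something larger or smaller. The verification has two halves -- showing $H' \subseteq E$ (easy if $H/E$ is abelian, so one picks $Q$ abelian) and showing $H' \supseteq E$, which requires that the conjugation action of $Q$ on $E$ produces enough commutators $[e,q] = e^{-1}e^q$ to generate $E$ modulo $Z(E)$, and separately that $Z(E)$ itself is hit (either by commutators $[e_1,e_2]$ within $E$, which works automatically since $E' = Z(E)$, or by commutators from the $Q$-action). So the real content is choosing $Q$ and its action correctly; once that is done the computation is a short check with the Heisenberg commutator identities. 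I would organize the proof as: (1) fix presentations of both groups of order $p^3$; (2) dispatch the exponent-$p^2$ case via Proposition~\ref{p:cc}, the only subtlety being characteristicity of $\langle a\rangle$; (3) define the candidate integral $H$ of the exponent-$p$ group explicitly; (4) compute $H'$ and identify it with $E$.
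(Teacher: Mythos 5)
Your plan for the negative half fails at the key step: in the modular group $M=\langle a,b\mid a^{p^2}=b^p=1,\ b^{-1}ab=a^{p+1}\rangle$ with $p$ odd, the subgroup $\langle a\rangle$ is \emph{not} characteristic, and Proposition~\ref{p:cc} cannot be applied at all. The element count you propose proves the opposite of what you want: since $(a^ib^j)^p=a^{pi}$, the elements of order $p^2$ are exactly those with $p\nmid i$, i.e.\ all $p^2(p-1)$ elements outside $B=\langle a^p,b\rangle$, whereas $\langle a\rangle$ contains only $p(p-1)$ of them. Accordingly $M$ has $p$ distinct cyclic maximal subgroups $\langle ab^j\rangle$, $j=0,\dots,p-1$, so your fallback (``$M$ has a unique cyclic abelian subgroup of index $p$'') is false; indeed $a\mapsto ab$, $b\mapsto b$ is an automorphism (check: $b^{-1}(ab)b=a^{p+1}b=(ab)^p\cdot ab=(ab)^{p+1}$) moving $\langle a\rangle$ to $\langle ab\rangle$. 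The only characteristic cyclic subgroup in sight is $\langle a^p\rangle=Z(M)=M'=\Phi(M)$, which is central, so Proposition~\ref{p:cc} yields nothing. (This is precisely where odd $p$ differs from the $D_8$ case, where elements outside the cyclic maximal subgroup all have order $2$.) The paper is forced into a genuinely harder argument: it computes $\Aut(M)$ explicitly, obtaining $|\Aut(M)|=p^3(p-1)$ with $\Out(M)$ isomorphic to the one-dimensional affine group, and then shows that for any putative integral $H$ the image $\overline{H}$ of $H$ in $\Aut(M)$ cannot satisfy $(\overline{H})'=\overline{G}$: the abelian quotient $\overline{H}/\overline{G}$ has order either $p$ (making $\overline{H}$ too small) or dividing $p-1$, in which case a diagonalizable outer automorphism fixes every coset of a subgroup $K$ of order $p^2$ and forces $H'\le K<G$. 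Some argument of this kind is unavoidable, and your proposal does not contain one.

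The positive half also needs repair. The group of upper unitriangular $4\times4$ matrices over the field of order $p$ has derived group generated by $I+xE_{13}$, $I+xE_{14}$, $I+xE_{24}$, which is elementary abelian of order $p^3$, not the Heisenberg group; so that specific candidate fails. Your general scheme (extend $E$ by an abelian group acting so that the commutators $[e,q]$ generate $E$ modulo its centre) is sound, and the paper realizes it by taking $H$ to be the full group of upper triangular invertible $3\times3$ matrices over $\GF(p)$, whose derived group is exactly the unitriangular group $E$. As written, though, you have not committed to a working choice of $H$ nor carried out the verification.
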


\begin{proof}
The group of exponent $p$ is isomorphic to the group $G$ of upper unitriangular
$3\times3$ matrices over the field of order $p$. A short calculation with
matrices shows that $G=H'$, where $H$ is the group of upper triangular
matrices with non-zero elements on the diagonal.

The group of exponent $p^2$ has the presentation
\[G=\langle a,b:a^{p^2}=b^p=1,b^{-1}ab=a^{p+1}.\rangle\]
We first develop some properties of this group. Note that its centre has
order $p$ and is generated by $a^p$. Let $B=\langle a^p,b\rangle$. Then $B$
is elementary abelian of order $p^2$.

We show first that every element outside $B$ has order $p^2$. To see this,
note that $b^jab^{-j}=a^{-jp+1}$, so
\[b^ja^ib^{-j}=(b^jab^{-j})^i=a^{i(-jp+1)}.\]
Then we get
\begin{eqnarray*}
(a^ib^j)^p &=& a^i\cdot b^ja^ib^{-j}\cdot b^{2j}a^ib^{-2j}\cdots
b^{(p-1)j}a^ib^{-(p-1)j} \\
&=& a^i\cdot a^{i(-jp+1)}\cdot a^{i(-2jp+1)}\cdots a^{i(-(p-1)jp+1)} \\
&=& a^{pi},
\end{eqnarray*}
since $j(1+2+\cdots+(p-1))$ is divisible by $p$ (since $p$ is odd) and
$a^{p^2}=1$. Thus, $a^ib^j$ has order $p$ if and only if $p$ divides $i$,
which means that $a^ib^j\in B$.

Let $\alpha$ be an automorphism of $G$, and suppose that
$a^\alpha=a^ib^j$ (where $p$ does not divide $i$) and $b^\alpha=a^{pk}b^l$
(for if $b^\alpha \not \in B$, then $b^\alpha$ would have order $p^2$, which is impossible).
We must have
\[b^{-\alpha}a^\alpha b^\alpha=(a^\alpha)^{p+1}.\]
Since $a^{kp}$ is central, for the left-hand side we have
\begin{eqnarray*}
b^{-l}(a^ib^j)b^l &=& b^{-l}a^ib^lb^j \\
&=& a^{i(lp+1)}b^j.
\end{eqnarray*}
On the right we have
\[(a^ib^j)^{p+1} = (a^ib^j)^p a^i b^j = a^{i(p+1)}b^j.\]
So we must have $l=1$. There are $p(p-1)$ choices for $i$, $p$ for $j$, and
$p$ for $k$; so $|\Aut(G)|=p^3(p-1)$. The inner automorphism group has order
$|G/Z(G)|=p^2$, and so the outer automorphism group has order $p(p-1)$.

In more detail: Conjugation by $b$ corresponds to $(i,j,k)=(p+1,0,0)$, while
conjugation by $a$ corresponds to $(i,j,k)=(1,0,-1)$. So we can represent
the outer automorphism group by pairs $(i\pmod{p},j)$. Calculation shows that
this group is isomorphic to the $1$-dimensional affine group.

Now suppose that $H$ is an integral of $G$. Then $H$ acts on $G$ by 
conjugation, so there is a homomorphism $\theta:H\to\Aut(G)$, whose restriction
to $G$ maps $G$ to $\Inn(G)$. Write $\overline{H}$ and $\overline{G}$ for the images of
$H$ and $G$ under $\theta$. Then $(\overline{H})'=\overline{G}$, hence $\overline{H}/\overline{G}$ is
abelian, so either its order is $p$ or it divides $p-1$.

If $\overline{H}/\overline{G}$ has order $p$, then $|\overline{H}|=p^3$, and so it is not
possible that $|(\overline{H})'|=|\overline{G}|=p^2$.

Suppose that $m=|\overline{H}/\overline{G}|$ divides $p-1$. Up to conjugation, we may
assume that an element $\overline{h}$ of $\overline{H}$ of order $m$ is represented as an
outer automorphism of $G$ by a map with $j=0$. This means that the automorphism
fixes $b$. Its action on the quotient $G/Z(G)$, regarded as a $2$-dimensional
vector space, is a diagonal matrix with eigenvalues $\lambda$ and $1$. An
eigenspace with eigenvalue $\lambda$ has the property that all its cosets are
fixed. This means that all the automorphisms in $\overline{H}$ fix every coset of
a subgroup $K$ of $G$ of order $p^2$; so the commutator of any two of them
belongs to $K$. So the derived group of $H$ is contained in $K$, and cannot
be $G$.\qed
\end{proof}

For higher powers of a prime, a similar result holds:

\begin{prop}
Let $p$ be an odd prime and $n>3$. Let
\[G=\langle a,b\mid a^{p^{n-2}}=1, b^{p^2}=1, b^{-1}ab=a^{p^{n-3}+1}\rangle.\]
Then $G$ is not integrable.
\label{p:primepower}
\end{prop}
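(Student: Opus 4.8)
The plan is to follow the template of the second half of Theorem~\ref{t:primecubed}, treating the exponent-$p^2$ group of order $p^3$ as the case $n=3$ and generalizing the structural facts used there. First I would record the basic arithmetic of $G$: the subgroup $\langle a\rangle$ is cyclic of order $p^{n-2}$, it is normal (indeed the relation shows $b$ normalizes it), and $b$ has order $p^2$, so $|G|=p^n$. The key structural claim to establish is that $\langle a\rangle$ is characteristic in $G$, so that any integral $H$ acts on it via $\theta\colon H\to\Aut(\langle a\rangle)$. Since $\Aut$ of a cyclic group is abelian, $G=H'$ acts trivially on $\langle a\rangle$; but the defining relation $b^{-1}ab=a^{p^{n-3}+1}$ says $b$ acts nontrivially on $\langle a\rangle$ (as $p^{n-3}+1\not\equiv 1\pmod{p^{n-2}}$ when $n>3$), and $b\in G=H'$ — contradiction. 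If $\langle a\rangle$ is genuinely characteristic, this finishes the proof immediately, and the argument is shorter than that of Theorem~\ref{t:primecubed}.

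The main obstacle is verifying that $\langle a\rangle$ is characteristic in $G$. Unlike the $n=3$ case, one cannot simply say ``all elements outside a subgroup have larger order''; here I expect the cleanest route is to identify $\langle a\rangle$ intrinsically. One natural candidate: $\langle a\rangle$ should be the unique cyclic subgroup of order $p^{n-2}$, or it should coincide with a term of a characteristic series (e.g.\ it contains $\Phi(G)=G'\langle g^p : g\in G\rangle$ and the quotient is elementary abelian of rank $2$, with $\langle a\rangle$ distinguished among the order-$p^{n-2}$ subgroups by being the only one on which $G$ acts the ``right'' way). Concretely I would compute, as in Theorem~\ref{t:primecubed}, the orders of elements $a^ib^j$: using $b^{-j}a b^{j}=a^{(p^{n-3}+1)^{j}}$ and expanding $(a^ib^j)^{p}$, $(a^ib^j)^{p^2}$ as telescoping products, one shows that elements outside a suitable subgroup of index $p$ have order exactly $p^{n-2}$ while controlling which powers land where; this pins down $\langle a\rangle\langle b^p\rangle$ or $\langle a\rangle$ characteristically. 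The exponent-of-$G$ and the structure of $\Phi(G)$ are the invariants doing the work.

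As a fallback, if identifying $\langle a\rangle$ as characteristic proves delicate, I would instead mimic Theorem~\ref{t:primecubed} more literally: compute $\Aut(G)$ explicitly (an automorphism must send $a\mapsto a^ib^j$ with $p\nmid i$ and $b\mapsto$ an element congruent mod $\langle a\rangle$ to a power of $b$, and the relation forces strong constraints on the exponents), determine $\Out(G)$, and show that any subgroup $\overline{H}\le\Aut(G)$ with $\overline{H}/\overline{G}$ abelian — where $\overline{G}=\Inn(G)$ and $\overline{H}/\overline{G}$ has order $p$ or dividing $p-1$ — cannot satisfy $(\overline H)'=\overline G$: in the order-$p$ case by an order count ($|\overline H|$ would be too small to have derived subgroup of order $|\overline G|$), and in the $m\mid p-1$ case by the eigenspace argument, finding a proper subgroup $K<G$ whose cosets are all fixed by every automorphism in $\overline{H}$, so $H'\le K\ne G$. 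Either way the conclusion is that $G$ has no integral.
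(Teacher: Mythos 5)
There is a genuine gap: the key structural claim on which your primary argument rests --- that $\langle a\rangle$ is characteristic in $G$ --- is false. Since $(p^{n-3}+1)^p\equiv 1\pmod{p^{n-2}}$ for $n>3$, the element $b^p$ centralises $a$ and hence lies in $Z(G)$; one then checks that $ab^p$ has order $p^{n-2}$ and satisfies $b^{-1}(ab^p)b=(ab^p)^{p^{n-3}+1}$, so the assignment $a\mapsto ab^p$, $b\mapsto b$ extends to an automorphism of $G$. As $G=\langle a\rangle\rtimes\langle b\rangle$ forces $ab^p\notin\langle a\rangle$, this automorphism moves $\langle a\rangle$. The same example defeats your proposed intrinsic characterisations: $\langle ab^p\rangle$ is a second cyclic subgroup of order $p^{n-2}$, and $\langle a\rangle$ does not even contain $\Phi(G)=\langle a^p,b^p\rangle$. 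So the short route via Proposition~\ref{p:cc} is not available here.

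Your fallback --- computing $\Aut(G)$ for general $n$ and running the eigenspace argument of Theorem~\ref{t:primecubed} --- is plausible in spirit but is only sketched, and the order/automorphism bookkeeping becomes genuinely more intricate as $n$ grows. The paper avoids this by doing the explicit automorphism analysis only for $n=4$ (where, as in the $p^3$ case, one pins down the possible images of $a$ and $b$ by element orders relative to $Z(G)=\Phi(G)=\langle a^p,b^p\rangle$), and then handling $n>4$ by induction: the subgroup $M=\langle a^{p^2}\rangle$, being the Frattini subgroup of $\Phi(G)$, is characteristic in $G$, so any integral $H$ would give $(H/M)'=G/M\cong G_4$, contradicting the base case. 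If you want to salvage your write-up, you should adopt some such reduction rather than attempting a uniform $\Aut(G)$ computation, and in any case the characteristic-cyclic-subgroup shortcut must be abandoned.
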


\begin{proof}
We first deal with the case $n=4$, following the arguments in the proof of
Theorem~\ref{t:primecubed}. In this case, the group $G=G_4$ has order $p^4$;
its centre and Frattini subgroup coincide, and $Z(G)=\langle a^p,b^p\rangle$,
elementary abelian of order $p^2$, while its derived subgroup is generated by
$a^p$ and is cyclic of order $p$. The calculations in the proof of
Theorem~\ref{t:primecubed} show that $(a^ib^j)^p=a^{pi}b^{pj}$. So elements
outside $Z(G)$ have order $p^2$.

Any automorphism must map $a$ to an element whose $p$th power lies in the
derived group, necessarily of the form $a^ib^{pj}$, and $b$ to an element not
of this form, say $a^kb^l$ where $p\nmid l$. Now the proof continues almost
exactly as in the proof of Theorem~\ref{t:primecubed}.

For $n>4$, we can complete the proof by induction. We note that the Frattini
subgroup of $G$ is generated by $a^p$ and $b^p$, and is abelian, with structure
$C_p\times C_{p^{n-3}}$; its Frattini subgroup $M$ is the group generated by 
$a^{p^2}$ (isomorphic to $C_{p^{n-4}}$). Thus $M$ is a nontrivial characteristic
subgroup of $G$. If $H$ is an integral of $G$, then $M$ is normal in $H$, and
$(H/M)'=H'/M=G/M\cong G_4$. But we showed above that $G_4$ is not integrable,
so this is a contradiction.\qed
\end{proof}

Now we consider product constructions, and show the following.

\begin{prop}
Let $G=G_1\times G_2$.
\begin{enumerate}\itemsep0pt
\item If $G_1$ and $G_2$ are integrable, then so is $G$.
\item If $G$ is integrable and $\gcd(|G_1|,|G_2|)=1$, then $G_1$ and $G_2$
are integrable.
\item If $G_1$ is centreless and $G_2$ is abelian, then $G$ is integrable if
and only if $G_1$ is integrable.
\end{enumerate}
\label{p:products}
\end{prop}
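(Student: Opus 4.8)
The common thread in all three parts is the principle used in the proof of Proposition~\ref{p:cc}: if $H'=G$ and $N$ is characteristic in $G$, then $N$ is normal in $H$ (indeed characteristic, since $G=H'$ is itself characteristic in $H$), and $(H/N)'=H'N/N=G/N$, so that $H/N$ is an integral of $G/N$. I would deduce the proposition by combining this observation with elementary facts about direct products. Part~(a) is then immediate: if $H_1'=G_1$ and $H_2'=G_2$, then $(H_1\times H_2)'=H_1'\times H_2'=G_1\times G_2=G$, so $H_1\times H_2$ is an integral of $G$.

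For part~(b), the key point is that coprimality of $|G_1|$ and $|G_2|$ forces $G_1$ to be \emph{characteristic} in $G$: writing $\pi$ for the set of prime divisors of $|G_1|$, the subgroup $G_1$ is the unique Hall $\pi$-subgroup of $G$ (equivalently $G_1=O_\pi(G)$, since $G/G_1\cong G_2$ is a $\pi'$-group), hence characteristic; symmetrically $G_2$ is characteristic. So if $H$ is an integral of $G$, both $G_1$ and $G_2$ are normal in $H$, and since $G_2\le G=H'$ we get $(H/G_2)'=H'G_2/G_2=G/G_2\cong G_1$. Thus $H/G_2$ is an integral of $G_1$, and by symmetry $H/G_1$ is an integral of $G_2$.

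For part~(c), the ``if'' direction follows from part~(a) together with the integrability of abelian groups recalled at the start of this section: $G_2$ is integrable because it is abelian, $G_1$ is integrable by hypothesis, so $G=G_1\times G_2$ is integrable. For the ``only if'' direction, centrelessness of $G_1$ gives $Z(G)=Z(G_1)\times Z(G_2)=1\times G_2=G_2$, so $G_2$ is the centre of $G$ and in particular characteristic in $G$. Hence any integral $H$ of $G$ has $G_2$ normal in $H$, and $(H/G_2)'=H'G_2/G_2=G/G_2\cong G_1$, so $G_1$ is integrable.

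I do not expect a serious obstacle: every step reduces to the ``characteristic subgroup of $H'$ is normal in $H$'' observation plus the structure of direct products. The only points requiring a moment's attention are the identification of $G_1$ as a characteristic subgroup in~(b) — where coprimality is genuinely needed, a direct factor not being characteristic in general — and the identification $Z(G)=G_2$ in~(c), which is where the hypothesis that $G_1$ is centreless enters.
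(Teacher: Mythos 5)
Your proposal is correct and follows essentially the same route as the paper: part (a) by taking the direct product of integrals, and parts (b) and (c) by identifying $G_1$ (respectively $G_2$) as a characteristic subgroup of $G=H'$ — via coprimality of orders in (b) and via $Z(G)=G_2$ in (c) — and then passing to the quotient $H/N$, whose derived group is $G/N$. The only difference is that you supply slightly more justification (the Hall/unique-$\pi$-subgroup argument for characteristicity, and the explicit appeal to integrability of abelian groups for the converse in (c)) where the paper simply asserts these points.
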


\begin{proof}
(a) Suppose that $H_i'=G_i$ for $i=1,2$, and let $H=H_1\times H_2$. Then
\[H'=H_1'\times H_2'=G_1\times G_2=G.\]

\smallskip

(b) Suppose that $H$ is an integral of $G_1\times G_2$.
Then $G_1$ is a characteristic subgroup of $G_1\times G_2$, and hence is
normal in $H$, and is contained in $H'$. Thus
\[(H/G_1)'=H'/G_1\cong G_2,\]
so $G_2$ is integrable; and similarly $G_1$ is integrable.

\smallskip

(c) Suppose that $G_1\times G_2$ is integrable, say $H'=G_1\times G_2$. By
assumption, $G_2=Z(G_1\times G_2)$, so $G_2$ is a characteristic subgroup of
$G$, and thus is normal in $H$. Then
\[(H/G_2)'=H'/G_2=(G_1\times G_2)/G_2\cong G_1,\]
so $G_1$ is integrable. The converse is clear.\qed
\end{proof}

The ``centreless'' condition in part (c) is essential. For example,
$D_8$ is not integrable, but $C_2\times D_8$ has an integral of order~$128$.

\section{Abelian groups}
\label{s:abel}

As we noted earlier, Guralnick observed that finite abelian groups are
integrable. Indeed, $A\wr C_2$ is an integral of the abelian group $A$,
and has order $2n^2$ if $|A|=n$.

It is possible to construct much smaller integrals of abelian groups in most
cases. If $A$ is an abelian group of odd order, then the group
\[\langle A,t\mid t^2=1,t^{-1}at=a^{-1}\hbox{ for all }a\in A\rangle\]
is an integral of $A$ of order $2|A|$. Since any finite abelian group is the
direct product of a group of odd order and a $2$-group, the results of the
last section show that it is enough to consider the latter.

Observe the following:
\begin{itemize}
\item
If $A\cong(C_{2^m})^2=\langle a_1,a_2\rangle$, then
\[\langle A,s:s^3=1,s^{-1}a_1s=a_2,s^{-1}a_2s=a_1^{-1}a_2^{-1}\rangle\]
is an integral of $A$ of the form $A\rtimes C_3$.
\item
If $A\cong(C_{2^m})^3$, there is similarly an integral of $A$ of the form
$A\rtimes C_7$. (This is a little more complicated than the previous: there
we used the integer matrix $\pmatrix{0&1\cr-1&-1\cr}$ of order $3$. There is
no $3\times3$ integer matrix of order~$7$; but there is such a matrix over the
$2$-adic integers. Equivalently, $(C_2)^3$ has an automorphism of order~$7$
(and is the derived subgroup of the semidirect product), and this automorphism
can be lifted to $(C_{2^m})^3$ for all $m$.)
\item If $A\cong C_{2^m}$, then the dihedral group of order $2^{m+2}$ is an
integral of $A$.
\end{itemize}

Thus, a finite abelian group $A$ has an integral of order at most
$42 \cdot 2^m \cdot |A|$, where $m$ is the number of powers $2^a$ for which the expression
for $A$ as a direct product of cyclic groups of prime power order has a unique
factor of order $2^a$. For this we extend each such cyclic factor to one twice
as large; then extend by a cyclic group of order $42$, where the element of
order $2$ inverts these cyclic groups and the odd-order part of $A$, while
elements of orders $3$ and $7$ act as previously described on products of two
or three cyclic $2$-groups of the same order. (Any number greater than $1$ can
be written as a sum of $2$s and $3$s.)
Noting that $|A|\ge 2^{1+2+\cdots+m}=2^{m(m+1)/2}$, we see that the
order of this integral is at most $|A|^{1+o(1)}$.

Can this bound be reduced to $c|A|$ for some constant $c$? We see that, to
answer this question, we need to consider direct products of cyclic $2$-groups
of distinct orders.

\begin{lemma}\label{lemma1}
Let $H$ be a $2$-group acting by automorphisms on the finite elementary abelian $2$-group $A$, then
\[|A/[A,H]|\ge |A| ^{1/|H|}.\]
\end{lemma}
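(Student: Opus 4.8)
The plan is to work over the field $\GF(2)$, thinking of $A$ as an $\mathbb{F}_2[H]$-module, and to exploit the fact that $H$ is a $2$-group, so that $\GF(2)$ is a field of characteristic dividing $|H|$. The key structural fact is that for a $p$-group $H$ acting on a nonzero $\mathbb{F}_p$-vector space, the fixed-point space is nonzero; applied to the quotient module $A/[A,H]$ this is vacuous, so instead I would iterate the commutator operation. Concretely, set $A_0=A$ and $A_{i+1}=[A_i,H]$; since $H$ is a $p$-group acting on the finite $p$-group $A_i$, as long as $A_i\neq 0$ we have $A_i/A_{i+1}=A_i/[A_i,H]\neq 0$, i.e. the chain $A=A_0\geq A_1\geq A_2\geq\cdots$ is strictly decreasing until it reaches $0$, and it must terminate: $A_d=0$ for some $d$. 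This gives a filtration of $A$ by $H$-submodules whose successive quotients $A_i/A_{i+1}$ are each nontrivial and, crucially, are acted on trivially by... no — that is not true, so I need a finer argument.

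The cleaner route is the following counting argument. Let $c=\log_2|A/[A,H]|$, the $\mathbb{F}_2$-dimension of the coinvariants, equivalently the minimum number of generators of $A$ as an $\mathbb{F}_2[H]$-module (by the version of Nakayama's lemma for the augmentation ideal $I$ of $\mathbb{F}_2[H]$, since $[A,H]=IA$ and $I$ is the Jacobson radical of the local ring $\mathbb{F}_2[H]$ when $H$ is a finite $2$-group). So $A$ is a quotient of the free module $\mathbb{F}_2[H]^{c}$, whence $|A|\leq |\mathbb{F}_2[H]|^{c}=2^{c|H|}$. Taking logarithms, $\log_2|A|\leq c\,|H|$, that is $c\geq \log_2|A|/|H|$, which rearranges exactly to $|A/[A,H]|=2^{c}\geq |A|^{1/|H|}$. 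This is the whole proof, modulo the two cited facts: that $\mathbb{F}_2[H]$ is local with Jacobson radical the augmentation ideal, and the consequent form of Nakayama identifying the minimal generating number with $\dim_{\mathbb{F}_2} A/IA$.

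So the steps, in order, are: (1) reinterpret $A$ as a finitely generated $\mathbb{F}_2[H]$-module and $[A,H]$ as $IA$ where $I$ is the augmentation ideal; (2) recall that for a finite $p$-group $H$ the group algebra $\mathbb{F}_p[H]$ is local with maximal (= Jacobson radical) ideal $I$, and that $I$ is nilpotent; (3) apply Nakayama to get that $d:=\dim_{\mathbb{F}_2}(A/IA)$ equals the minimal number of module generators of $A$, so $A$ is an epimorphic image of $\mathbb{F}_2[H]^d$; (4) compare orders: $|A|\leq 2^{d|H|}$, and rearrange. The main obstacle — really the only place needing care — is step (2)/(3): one must be sure that the "number of generators" version of Nakayama applies, i.e. that $\mathbb{F}_2[H]$ is a (not necessarily commutative) local ring with radical $I$ and that $\dim_{\mathbb{F}_2} A/IA$ bounds the generator number from above. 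This follows because $I$ is nilpotent (as $H$ is a $p$-group in characteristic $p$), so $1+I$ consists of units and $I$ is the radical; lifting an $\mathbb{F}_2$-basis of $A/IA$ to elements of $A$ and invoking $\bigcap_k I^k A=0$ (from nilpotency of $I$) gives that these elements generate $A$. Everything else is bookkeeping with orders of finite groups.
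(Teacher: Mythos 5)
Your argument is correct, but it follows a genuinely different route from the paper's. The paper proceeds by induction on $|H|$: for $|H|=2$ it computes directly that $[A,x]\le C_A(x)$ and that $a\mapsto[a,x]$ is a homomorphism with kernel $C_A(x)$, giving $|A/[A,x]|\ge|A|^{1/2}$, and for larger $H$ it passes to a central subgroup $Z$ of order $2$ and applies the inductive hypothesis to $H/Z$ acting on $A/[A,Z]$, using $[A,H]/[A,Z]=[\overline{A},H]$. You instead make $A$ an $\mathbb{F}_2[H]$-module, identify $[A,H]$ with $IA$ for $I$ the augmentation ideal, use that $\mathbb{F}_2[H]$ is local with nilpotent radical $I$ (so Nakayama applies), and conclude that $A$ is a quotient of $\mathbb{F}_2[H]^{c}$ with $c=\dim_{\mathbb{F}_2}(A/IA)$, whence $|A|\le 2^{c|H|}$; this is a clean one-shot argument. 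All the ingredients you cite are standard and correctly deployed ($[A,H]=IA$ because $I$ is spanned by the elements $h-1$ and $(h-1)a=[a,h]$; nilpotency of $I$ for a finite $2$-group in characteristic $2$; the lifting-generators form of Nakayama for a not-necessarily-commutative local ring). What your approach buys is generality and a stronger conclusion: it works verbatim for a $p$-group acting on an elementary abelian $p$-group for any prime $p$, and it shows $A$ is generated by $c$ elements as a module, from which the inequality is mere bookkeeping. What the paper's approach buys is elementarity: no group-algebra or module theory, only a two-line commutator computation and an induction. One cosmetic remark: your first paragraph is an abandoned false start (you correctly note that the successive quotients $A_i/A_{i+1}$ need not be acted on trivially); since you discard it and the second paragraph is self-contained, it does no harm, but it should be deleted from a final write-up.
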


\begin{proof} By induction on $|H|$.  Let $H = \langle x\rangle$ have order $2$. Then for every $a\in A$,
\[[a,x]^x = [a,x]^{-1} =  [a,x],\]
hence $[A,x]\le C_A(x)$. On the other hand, the map $a\mapsto [a,x]$ is a homomorphism of the abelian group $A$, and so \[ \left|\frac{A}{[A,x]}\right| \ge \left|\frac{A}{C_A(x)}\right| = |[A,x]|\]
which is what we want.

Let now $|H|\ge 4$, let $Z$ be a central subgroup of order $2$ of $G$ and $\overline A = A/[A,Z]$. Then $H/Z$ acts on $\overline A$ and, by inductive assumption,
\[|\overline A/[\overline A,H]|\ge |\overline A|^{1/|H/Z|} = |\overline A| ^{2/|H|}.\]
Now clearly $[\overline A, H] = [A,H]/[A,Z]$, whence
\[\left| \frac{A}{[A,H]}\right| = \left| \frac{\overline A}{[\overline A,H]}\right|\ge |\overline A|^{2/|H|}\ge \left(|A|^{1/2}\right)^{2/|H|} = |A| ^{1/|H|}.\]
\qed\end{proof}

\begin{lemma}\label{lemma2} 
Let $A$ be a finite elementary abelian $2$-group, and $G$ a $2$-group such that $G'=A$; writing $H = G/A$, we have
\[
|H|\log^2 |H| \ge 2\log |A|.
\]
\end{lemma}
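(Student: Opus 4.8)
The plan is to combine Lemma~\ref{lemma1} with a counting of generators of $A$ as a normal subgroup of $G$. Set $A=G'$, which is elementary abelian, hence a vector space over $\GF(2)$; write $|A|=2^a$ and $|H|=2^h$, so the target inequality reads $h\cdot h^2 \cdot (\log 2)^? \ge 2 a \log 2$ after unwinding—more precisely, with $\log$ to some fixed base, $2^h \cdot h^2 (\log 2)^2 \ge 2 a (\log 2)$, i.e.\ I need $|H|\log^2|H|\ge 2\log|A|$ which is what is stated. First I would record the elementary but crucial fact that $A=G'$ is generated, \emph{as a normal subgroup of $G$}, by at most $h=\log_2|H|$ elements: indeed $A/[A,G]$ is generated by the images of the commutators $[x_i,x_j]$ for $x_1,\dots,x_d$ a generating set of $G$, but since $G/A=H$ and $A=\Phi$-related to... — cleaner is to use that $G'/[G',G]$ is a quotient of $\bigwedge^2(G/G'\Phi(G))$, or simply that $G$ is generated by $\le h + (\text{number of generators of }A)$ elements and push the bound through. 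Let me instead go the direct route below.

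\medskip

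\textbf{Key steps.} Let $d=d(H)$ be the minimal number of generators of $H$; since $H$ is a $2$-group, $|H|\ge 2^d$, so $d\le\log_2|H|$. Lift generators $y_1,\dots,y_d$ of $H$ to elements $x_1,\dots,x_d$ of $G$. Then $G=\langle x_1,\dots,x_d, A\rangle$, and since $A=G'$ is generated by commutators, $G=\langle x_1,\dots,x_d\rangle$ (an element of $A$ is a product of commutators of elements of $G$, each of which lies in $\langle x_1,\dots,x_d\rangle A$, and one iterates / uses that $A\le\Phi(G)$ because $G/A$ being a $2$-group forces $A$, as the derived subgroup, into the Frattini subgroup). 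Hence $d(G)=d\le\log_2|H|$. Now $A=G'$ is a module for $H=G/A$ acting by conjugation, and $A/[A,G]$ is generated by the images of the $\binom{d}{2}\le d^2/2$ commutators $[x_i,x_j]$; therefore $|A/[A,G]|\le 2^{d^2/2}$. On the other hand, Lemma~\ref{lemma1} applied to the action of the $2$-group $H$ on the elementary abelian $2$-group $A$ gives $|A/[A,H]| = |A/[A,G]| \ge |A|^{1/|H|} = 2^{a/|H|}$. Comparing, $a/|H| \le d^2/2 \le (\log_2|H|)^2/2$, so $a \le \tfrac12 |H| (\log_2|H|)^2$, which rearranges to $|H|\log^2|H|\ge 2\log|A|$ (all logs to base $2$; the statement is base-independent up to the interpretation of $\log$, and one checks the constant matches).

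\medskip

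\textbf{Main obstacle.} The delicate point is the claim $d(G)=d(H)$, equivalently that $A=G'\le\Phi(G)$: this needs $G/A$ to be a $2$-group so that $A$, being the derived subgroup, is contained in every maximal subgroup (maximal subgroups of a $p$-group are normal of index $p$, hence contain $G'$). Once $A\le\Phi(G)$, the generator count $d(G)=d(G/\Phi(G))=d(H/\Phi(H))=d(H)$ follows, and everything else is bookkeeping. A secondary subtlety is matching the exact constant in the displayed inequality: the argument yields $|H|(\log_2|H|)^2\ge 2\log_2|A|$, and one must confirm the paper's $\log$ is base $2$ (or absorb the resulting multiplicative constant, which is harmless for the intended application to lower bounds on integrals of abelian groups). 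I would also double-check the edge case $H=1$ (then $G=A$ is abelian and $G'=1$, so $A=1$ and both sides vanish) and $|H|=2$ separately, since $\log^2|H|$ is small there.
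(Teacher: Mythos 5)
Your proof is correct and follows essentially the same route as the paper: both arguments sandwich $|A/[A,G]|$ between the lower bound $|A|^{1/|H|}$ supplied by Lemma~\ref{lemma1} and an upper bound of the form $2^{t(t-1)/2}$ obtained by counting commutators of generators, with $t\le\log_2|H|$. The only cosmetic difference is that the paper extracts $t$ as the rank of $G/[A,G]$ modulo its centre, while you take $t=d(G)=d(H)$ via the Frattini argument; both give the same final estimate.
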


\begin{proof} Let $T = [A,G]$. Then, by Lemma \ref{lemma1},
\[|A|^{1/|H|} \le |A/T|.\]
On the other hand, $A/T = (G/T)'$ is an elementary abelian subgroup of $Z(G/T)$; by standard arguments it follows that $G/T$ modulo its centre is elementary abelian of order, say, $2^t$; moreover
\[ |A/T| \le 2^{{t\choose 2}} \le |H|^{(t-1)/2} \le |H|^{\log|H|/2}.\]
Hence
\[ |A|^{1/|H|} \le |H|^{\log|H|/2},\]
i.e. $|H|\log^2 |H| \ge 2\log |A|$.\qed
\end{proof}

\begin{prop}\label{px1}
Let $A$ be an abelian $2$-group which is a direct product of $m$ cyclic groups
of distinct orders. Suppose that $A=G'$ for some group $G$. Then
$|G:A|\to\infty$ as $m\to\infty$.
\end{prop}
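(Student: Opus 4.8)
The plan is to reduce Proposition~\ref{px1} to the elementary abelian case already handled by Lemma~\ref{lemma2}. Suppose $A = C_{2^{a_1}} \times \cdots \times C_{2^{a_m}}$ with $a_1 < a_2 < \cdots < a_m$, and suppose $G' = A$. The key observation is that $G$ is then a BFC-group (since conjugacy classes lie in cosets of $G' = A$, as in the proof of Theorem~\ref{t:finite}), so $Z(G)$ has finite index in $G$; replacing $G$ by a finitely generated subgroup which still has derived subgroup $A$, we may assume $G$ is finitely generated and hence that $G/Z(G)$ is finite. Set $H = G/C_G(A)$, the group of automorphisms of $A$ induced by conjugation; since $A \leq Z(G)\cdot\text{(something)}$ — more precisely $G/C_G(A)$ is a quotient of $G/Z(G)$ intersected appropriately — $H$ is finite, and $|G:A|$ is controlled by $|H|$ together with the action. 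I would aim to show: if $|G:A|$ is bounded, then $|H|$ is bounded, and then derive a contradiction with $m \to \infty$.

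The heart of the argument is that $A$ has a characteristic elementary abelian section on which $H$ acts, and the size of that section grows with $m$. Concretely, consider the subgroup $A[2] = \{a \in A : a^2 = 1\}$ of elements of order dividing $2$: this is characteristic in $A$, hence normal in $G$, and elementary abelian of rank exactly $m$. More useful is the ``layered'' structure: for each $i$, the subgroup $A^{2^i}/A^{2^{i+1}}$ is elementary abelian, characteristic, and because the cyclic factors have distinct orders, the ranks of these successive quotients step down by one as $i$ increases. The total information is that $A$, as a $\mathbb{Z}/2\mathbb{Z}[H]$-module after passing to $A/A^2 \cong \mathbb{F}_2^m$, is a faithful-enough module; but $A/A^2$ need not carry a faithful $H$-action. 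To get faithfulness I would instead work with $\Omega = A[2]$ together with the observation that an automorphism of the $2$-group $A$ (a direct product of cyclics of \emph{distinct} orders) is trivial iff it is trivial on each characteristic layer, and use that $\Aut(A)$ embeds into a product of the automorphism groups of the layers — in fact, since the orders are distinct, $\Aut(A)$ is a $2$-group and acts faithfully on $A/\Phi(A) \cong \mathbb{F}_2^m$. Thus $H$ embeds in $\GL_m(2)$ via its action on $V = A/\Phi(A)$, and $H$ is a $2$-group.

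With $H$ a $2$-group acting faithfully on $V = A/\Phi(A) \cong \mathbb{F}_2^m$, I would apply Lemma~\ref{lemma1} to get $|V/[V,H]| \geq |V|^{1/|H|} = 2^{m/|H|}$. Now $V/[V,H]$ is a quotient of $A/\Phi(A)$ on which $H$ acts trivially, so lifting back, $G$ has a quotient whose derived subgroup maps into a proper subgroup unless $[V,H] = V$; more carefully, the argument of Lemma~\ref{lemma2} shows $2^{m/|H|}$ is bounded by a function of $|H|$ alone (roughly $|H|^{\log|H|/2}$), so $m \leq \tfrac{1}{2}|H|\log^2|H|$. Equivalently $|H| \to \infty$ as $m \to \infty$. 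Finally I must translate ``$|H| \to \infty$'' into ``$|G:A| \to \infty$'': since $H = G/C_G(A)$ is a quotient of $G/A$ (because $A$ is abelian, $A \leq C_G(A)$), we get $|G:A| \geq |H| \to \infty$, which is the claim.

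The main obstacle I anticipate is the reduction step: justifying that $G$ may be taken so that $G/A$ is finite, and pinning down that the induced automorphism group $H$ acts faithfully on the Frattini quotient $A/\Phi(A)$ and is therefore a $2$-group to which Lemma~\ref{lemma1} applies. The faithfulness on $A/\Phi(A)$ uses crucially that the cyclic direct factors have \emph{distinct} orders — for equal orders an automorphism can act trivially modulo the Frattini subgroup without being trivial (this is exactly why the earlier constructions with $C_3$ and $C_7$ on $(C_{2^m})^2$ and $(C_{2^m})^3$ work and do not contradict this proposition). Once faithfulness is in hand, the quantitative part is just Lemmas~\ref{lemma1} and~\ref{lemma2} applied to $V = A/\Phi(A)$, yielding $m \leq \tfrac12 |G{:}A|\log^2|G{:}A|$ and hence the divergence of $|G:A|$.
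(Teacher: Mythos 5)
Your strategy is the same as the paper's: use the fact that the cyclic factors have distinct orders to force the group of automorphisms induced on $A$ to be a $2$-group, then pass to $V=A/\Phi(A)\cong\mathbb{F}_2^m$ and feed it into Lemmas~\ref{lemma1} and~\ref{lemma2} to get $m\le\frac12\,|G{:}A|\log^2|G{:}A|$. The quantitative half is fine, with one small caveat: the second inequality in the proof of Lemma~\ref{lemma2} bounds $|V/[V,G]|$ by a function of $|G/A|$ (via the central quotient of $G/[V,G]$), not of $|G/C_G(A)|$, so the bound you can actually extract is the one in $|G{:}A|$, not the stronger $m\le\frac12|H|\log^2|H|$ with $H=G/C_G(A)$; since $|H|\le|G{:}A|$ this does not matter for the conclusion. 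The reduction to finitely generated $G$ is harmless but unnecessary, as a $G$ with $|G{:}A|$ infinite contributes nothing to the limit statement.

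The genuine problem is your justification of the pivotal fact that $\Aut(A)$ is a $2$-group, which is what licenses the application of Lemma~\ref{lemma1}. Three of your supporting claims are false. First, $\Aut(A)$ does \emph{not} act faithfully on $A/\Phi(A)$: inversion on $C_4\times C_2$ is a nontrivial automorphism that is trivial modulo $\Phi(A)$. Second, and by the same example, an automorphism that is trivial on every characteristic layer need not be trivial, so triviality is not ``detected on the layers''. Third, $\Aut(A)$ does not embed into the product of the automorphism groups of the layers; and even if it did, those layers are elementary abelian of possibly large rank, so their automorphism groups are full $\GL_k(2)$'s and nothing about $\Aut(A)$ being a $2$-group would follow. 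The correct argument, which is the one the paper gives, splits the two ingredients you conflated: (i) the kernel of $\Aut(A)\to\Aut(A/\Phi(A))$ is a $2$-group (Burnside basis theorem), so only the \emph{odd-order} automorphisms are guaranteed to act faithfully on $A/\Phi(A)$; and (ii) every automorphism preserves the characteristic subgroups consisting of elements of order dividing $2^{a_i}$, whose images in $A/\Phi(A)$ form a \emph{complete} flag precisely because the $a_i$ are distinct, so every automorphism acts on $A/\Phi(A)$ by a triangular, hence unipotent, matrix over $\mathbb{F}_2$, i.e.\ by an element of $2$-power order. An odd-order automorphism therefore acts trivially on $A/\Phi(A)$ and so, by (i), is trivial; hence $\Aut(A)$ is a $2$-group. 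With that repair your argument closes and coincides with the paper's.
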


\begin{proof}
Let $A=C_{2^{a_1}}\times\cdots\times C_{2^{a_m}}$, with $a_1,\ldots,a_m$ in
strictly decreasing order.
The Frattini subgroup $\Phi(A)$ has the property that $A/\Phi(A)$ is elementary
abelian of order $2^m$, and automorphisms of $A$ of odd order act faithfully
on $A/\Phi(A)$. There can be no such non-identity automorphisms. For the
subgroups of $A$ consisting of elements of orders dividing $2^{a_i}$ are
characteristic, and their projections onto $A/\Phi(A)$ form a composition 
series for this group, whose terms are necessarily fixed by automorphisms of
odd order. So elements of odd order in $G$ centralise $A$, and we can assume
without loss of generality that $G$ is a $2$-group. Now the result follows from
Lemma~\ref{lemma2}.\qed
\end{proof}

We conclude:

\begin{theorem}
\begin{enumerate}
\item A finite abelian group $A$ has an integral of order at most $|A|^{1+o(1)}$.
\item There is no constant $c$ such that every finite abelian group $A$ has an
integral of order at most $c|A|$.\qed
\end{enumerate}
\label{t:abelian}
\end{theorem}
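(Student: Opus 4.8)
The plan is to assemble Theorem~\ref{t:abelian} from the pieces already developed in this section, rather than to prove anything genuinely new; the two parts are independent and each is a short deduction.

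For part~(a), I would argue as follows. By the discussion preceding the theorem, a finite abelian group $A$ has an integral of order at most $42\cdot 2^m\cdot|A|$, where $m$ is the number of distinct $2$-power orders $2^a$ that occur with multiplicity exactly one among the cyclic factors of $A$. The only thing to check is that $42\cdot 2^m = |A|^{o(1)}$; this follows from the bound $|A|\ge 2^{1+2+\cdots+m}=2^{m(m+1)/2}$ noted just before the theorem statement, since then $\log_2|A|\ge m(m+1)/2$, so $m=O(\sqrt{\log|A|})$ and hence $2^m=|A|^{O(1/\sqrt{\log|A|})}=|A|^{o(1)}$; the constant $42$ is of course absorbed into the $o(1)$ term as well. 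So the whole integral has order $|A|\cdot|A|^{o(1)}=|A|^{1+o(1)}$.

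For part~(b), I would use Proposition~\ref{px1}. Suppose, for contradiction, that such a constant $c$ exists. Take $A_m = C_2\times C_4\times\cdots\times C_{2^m}$, a direct product of $m$ cyclic $2$-groups of distinct orders. By hypothesis $A_m$ has an integral $G_m$ with $|G_m|\le c|A_m|$, i.e.\ $|G_m:A_m|\le c$. But Proposition~\ref{px1} asserts that $|G:A|\to\infty$ as $m\to\infty$ for any group $G$ with $G'=A_m$, contradicting the uniform bound $c$ once $m$ is large enough. (One could equally take any sequence of abelian $2$-groups whose number of distinct-order cyclic factors tends to infinity.)

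There is no real obstacle here: both parts are bookkeeping on top of results already proved. If anything, the only point requiring a moment's care is confirming in part~(a) that the multiplicative slack $42\cdot 2^m$ is genuinely $|A|^{o(1)}$ and not merely $|A|^{O(1)}$ — but the superpolynomial growth $|A|\ge 2^{m(m+1)/2}$ against the merely exponential $2^m$ settles this immediately. Accordingly the proof can simply read ``\emph{Part~(a) follows from the construction preceding the theorem together with the inequality $|A|\ge 2^{m(m+1)/2}$; part~(b) is immediate from Proposition~\ref{px1}},'' which is presumably why the statement in the excerpt is already marked with \qed.
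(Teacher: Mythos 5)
Your proposal is correct and is exactly the argument the paper intends: the theorem is stated as a summary ("We conclude:") of the preceding construction (giving the $42\cdot 2^m\cdot|A|$ integral, with $2^m=|A|^{o(1)}$ via $|A|\ge 2^{m(m+1)/2}$) and of Proposition~\ref{px1} (which rules out a linear bound via products of cyclic $2$-groups of distinct orders), which is why it carries a \qed with no separate proof. Your spelled-out verification that $m=O(\sqrt{\log|A|})$ and the explicit contradiction in part~(b) are just the details the paper leaves implicit.
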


The arguments above can be refined to give explicit upper and lower bounds
for the order of the smallest integral of an abelian group.

We have computed the smallest integrals of abelian $2$-groups of orders up
to $64$. The results are in
Table~\ref{small_abelian}. The computations involved simply testing the
groups $H$ in the \texttt{SmallGroups} library in \textsf{GAP} to decide
whether $H'$ is isomorphic to the given group $G$.

\begin{table}[htbp]
\[\begin{array}{|c|c|c|}
\hline
\mbox{Order} & \mbox{Invariant} & \mbox{Smallest} \\
             & \mbox{factors} & \mbox{integral} \\
\hline
2 & (2) & 8 \\
\hline
4 & (4) & 16 \\
  & (2,2) & 12 \\
\hline
8 & (8) & 32 \\
  & (4,2) & 64 \\
  & (2,2,2) & 56 \\
\hline
16 & (16) & 64 \\
   & (8,2) & 128 \\
   & (4,4) & 48 \\
   & (4,2,2) & 128 \\
   & (2,2,2,2) & 48 \\
\hline
32 & (32) & 128 \\
   & (16,2) & 256 \\
   & (8,4) & 256 \\
   & (8,2,2) & 256 \\
   & (4,4,2) & 256 \\
   & (4,2,2,2) & 256 \\
   & (2,2,2,2,2) & 256 \\
\hline
\end{array}
\quad
\begin{array}{|c|c|c|}
\hline
\mbox{Order} & \mbox{Invariant} & \mbox{Smallest} \\
             & \mbox{factors} & \mbox{integral} \\
\hline
64 & (64) & 256 \\
   & (32,2) & 512 \\
   & (16,4) & 512 \\ 
   & (16,2,2) & 512 \\
   & (8,8) & 192 \\
   & (8,4,2) & 512 \\
   & (8,2,2,2) & 512 \\
   & (4,4,4) & 448 \\
   & (4,4,2,2) & 192 \\
   & (4,2,2,2,2) & 512 \\
   & (2,2,2,2,2,2) & 192 \\
\hline
\end{array}
\]
\caption{\label{small_abelian}Smallest integrals of abelian groups}
\end{table}

\section{Centreless groups}
\label{s:gs}

A group $G$ is \emph{complete} if its centre is trivial and
$\Aut(G)=\Inn(G)$. Equivalently, a group $G$ is complete if and only if, for
any group $H$ such that $G \unlhd H$, then
$H\cong G \times T$, for some group $T$.
This follows from \cite[Theorems 7.15 and 7.17]{rotman}
or \cite[Theorem 13.5.7]{robinson}. This has the following
consequence:

\begin{prop}
Let $G$ be a complete group. Then $G$ is integrable if and only if it is
perfect.
\label{p:complete}
\end{prop}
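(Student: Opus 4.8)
The plan is to use the characterization of complete groups given just before the statement: if $G$ is complete and $G\unlhd H$, then $H\cong G\times T$ for some group $T$. The claim is that a complete group $G$ is integrable if and only if it is perfect; the ``if'' direction is immediate, since any perfect group is its own integral ($G'=G$), so the real content is the ``only if'' direction.

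So suppose $G$ is complete and integrable, say $H'=G$. Since $G=H'\unlhd H$ and $G$ is complete, the quoted result gives $H\cong G\times T$ for some group $T$. Then I would compute the derived subgroup of this direct product: $H'\cong (G\times T)'=G'\times T'$. But $H'=G$, so comparing the two expressions forces $G\cong G'\times T'$. From this I would extract that $G=G'$: for instance, one way is to note that the projection of $G'\times T'$ onto the first factor is all of $G'$, while the isomorphism with $G$ means $G$ has a quotient isomorphic to $G'$; combined with $G'\le G$ and finiteness (or a more careful order/length argument in general), this yields $G=G'$, i.e.\ $G$ is perfect. A cleaner route: since $H\cong G\times T$, we have $G=H'=(G\times T)'=G'\times T'$, and intersecting with the first direct factor $G$ gives $G=G\cap(G'\times T')=G'$ (using that $G\cap(G'\times T')$, under the identification $G=G\times 1\le G\times T$, is exactly $G'\times 1$). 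Hence $G$ is perfect.

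The only subtlety — and the step I would be most careful about — is the bookkeeping in the isomorphism $H\cong G\times T$: one must make sure the copy of $G$ sitting inside $H$ as $H'$ is identified with the first direct factor $G\times 1$ under this isomorphism, which is exactly what the structure theorem for complete normal subgroups delivers (the complement $T$ is $C_H(G)$, and $G\times 1$ is the original normal subgroup). Once that identification is pinned down, the derived-subgroup computation and the intersection argument are routine. I do not anticipate any genuine obstacle here; the proposition is essentially a one-line deduction from the completeness dichotomy, and no finiteness hypothesis is actually needed.
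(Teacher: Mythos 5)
Your proposal is correct and follows essentially the same route as the paper: both reduce to the decomposition $H\cong G\times T$ furnished by completeness and then compute the derived subgroup of the direct product. The only cosmetic difference is that the paper observes $T\cong H/H'$ is abelian (so $T'=1$ and $H'=G'$ directly), whereas you intersect $G'\times T'$ with the factor $G\times 1$; these amount to the same bookkeeping.
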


\begin{proof}
Suppose that $H'=G$. Then $G\unlhd H$, so $H\cong G\times T$ for some
$T \unlhd H$ with $G \cap T = 1$; and  $T \cong GT/G \cong H/H'$ is abelian,
and so $G'=H'=G$.

The converse is trivial since every perfect group is integrable.\qed
\end{proof}

We now turn to the more general class of centreless groups (those with
trivial centre).

If $Z(G)=1$, then $G$ is isomorphic to a subgroup of $\Aut(G)$, namely the
group of inner automorphisms of $G$. Furthermore, $\Aut(G)$ also has trivial
centre. So the process can be continued:
\[G\le\Aut(G)\le\Aut(\Aut(G))\le\cdots.\]
Wielandt's \emph{automorphism tower theorem} (for example, see
\cite[Theorem 13.5.4]{wielandt}) says that the procedure terminates after
finitely many steps.  The final group in the sequence is complete.

In this section, we show that the same is true for reduced integrals of $G$.
Let $G$ be a group with $Z(G)=1$. We say that an integral $H$ of $G$ is
\emph{reduced} if $C_H(G)=1$. Asking for a reduced
integral removes the ``constant of integration'' (abelian direct factor), but
does more than this.

For example, let $G=A_5$, and let $H$ be a semidirect product of $G$ with a
cyclic group of order $4$ whose generator induces on $G$ the automorphism of
conjugation by a transposition. Then $C_H(G)$ is cyclic of order $2$, and
$H/C_H(G)$ is an integral of $G$ isomorphic to $S_5$.

\begin{lemma}
\label{thm:reduced-integral}
If $Z(G)=1$ and $H$ is an integral of $G$, then
\begin{enumerate}\itemsep0pt
\item $C_H(G)=Z(H)$;
\item $H/C_H(G)$ is a reduced integral of $G$.
\end{enumerate}
\end{lemma}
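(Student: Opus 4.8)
The plan is to exploit two facts about the subgroup $C_H(G)$. First, since $G=H'$ is characteristic in $H$, the centralizer $C_H(G)$ is normal in $H$. Second, since $G$ centralizes $C_H(G)$ and $Z(G)=1$, we have $C_H(G)\cap G=C_G(G)=Z(G)=1$. Both parts of the lemma come out by playing these two facts against the observation that every commutator in $H$ lies in $H'=G$.

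For part (1), one inclusion is immediate: $Z(H)$ centralizes all of $H$, in particular $G$, so $Z(H)\le C_H(G)$. For the reverse, I would take $x\in C_H(G)$ and an arbitrary $h\in H$. Because $C_H(G)\trianglelefteq H$, the conjugate $x^h$ lies in $C_H(G)$, so $[x,h]=x^{-1}x^h\in C_H(G)$; but also $[x,h]\in H'=G$. Hence $[x,h]\in C_H(G)\cap G=1$, so $x$ commutes with every $h\in H$, i.e. $x\in Z(H)$. This gives $C_H(G)=Z(H)$.

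For part (2), write $\overline H=H/C_H(G)$ and let $\overline G$ be the image of $G$. Since $G\cap C_H(G)=1$, the quotient map restricts to an isomorphism $G\to\overline G$, and $\overline H'=H'C_H(G)/C_H(G)=GC_H(G)/C_H(G)\cong G$, so $\overline H$ is an integral of $G$ (with $\overline G$ in the role of $G$). It remains to verify that $\overline H$ is reduced, i.e. $C_{\overline H}(\overline G)=1$. I would compute the preimage in $H$ of $C_{\overline H}(\overline G)$: it is the set of $x\in H$ with $[x,g]\in C_H(G)$ for all $g\in G$. For such $x$ and $g$, normality of $G$ in $H$ gives $x^{-1}g^{-1}x\in G$, so $[x,g]=(x^{-1}g^{-1}x)g\in G$; together with $[x,g]\in C_H(G)$ this forces $[x,g]\in G\cap C_H(G)=1$. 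Thus $x\in C_H(G)$, so the preimage is exactly $C_H(G)$ and $C_{\overline H}(\overline G)$ is trivial.

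There is no real obstacle here; the only point requiring care is justifying at each step that the relevant commutator genuinely lies in $G$ — either because it lies in $H'=G$, or via normality of $G$ in $H$ — so that the key identity $C_H(G)\cap G=Z(G)=1$ applies. One could instead deduce the reducedness in (2) by applying part (1) to the integral $\overline H$ of $\overline G\cong G$, obtaining $C_{\overline H}(\overline G)=Z(\overline H)$, but this still needs the extra remark that $Z(H/Z(H))$ is trivial, so the direct preimage computation above is the cleaner route.
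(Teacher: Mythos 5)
Your proof is correct and follows essentially the same route as the paper: both hinge on the observation that a relevant commutator lies simultaneously in $H'=G$ and in $C_H(G)$, and that $G\cap C_H(G)=Z(G)=1$ forces it to be trivial. The only cosmetic difference is in part (2), where the paper deduces reducedness by identifying $H/C_H(G)$ with the (faithful) group of automorphisms of $G$ induced by $H$, whereas you compute the preimage of the centraliser directly; the two arguments are interchangeable.
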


\begin{proof}
(a) Clearly $C_H(G)\ge Z(H)$. Take $h\in C_H(G)$, so that $g^h=g$ for all
$g\in G$. Let $t$ be any other element of $H$. Then $g^t \in G=H' \unlhd H$.
Then
\[g^{[h,t]}=g^{h^{-1}t^{-1}ht} =g\]
for all $g\in G$, so $[h,t]\in C_H(G)$. But $[h,t]\in H'= G$, and
$G\cap C_H(G)=Z(G)=1$. So $[h,t]=1$ for all $t\in H$, whence $h \in Z(H)$.

(b) We have
\[(H/C_H(G))'=H'C_H(G)/C_H(G)=GC_H(G)/C_H(G)\cong G/(G\cap C_H(G))=G,\]
so $H/C_H(G)$ is an integral of $G$. But $H/C_H(G)$ is isomorphic to the
group of automorphisms of $G$ induced by $H$; so $H/C_H(G)$ acts faithfully
on $G$, whence the centraliser of $G$ is trivial.\qed
\end{proof}

Note that, if $Z(G)=1$ and $H$ is a reduced integral of $G$, then $Z(H)=1$,
so the process can be continued.

\begin{prop}
Let $G$ be a finite group with $Z(G)=1$. Suppose that
\[G=G_0<G_1<G_2<\cdots,\]
where $G_{n+1}$ is a reduced integral of $G_n$ for all $n$. Then the sequence
terminates after finitely many steps.
\label{p:upper_bound}
\end{prop}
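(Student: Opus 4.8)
The plan is to show that the chain $G=G_0<G_1<\cdots$ embeds, step by step, into the automorphism tower of $G$, so that Wielandt's theorem forces termination. First I would observe that by Lemma~\ref{thm:reduced-integral}, each $G_n$ is centreless (the lemma tells us that a reduced integral of a centreless group is again centreless), so the whole construction stays inside the world of centreless groups and the automorphism tower of each $G_n$ makes sense.

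The key step is to build a coherent family of embeddings. Since $G_{n+1}$ is an integral of $G_n$, conjugation gives a homomorphism $\theta_n\colon G_{n+1}\to\Aut(G_n)$; because the integral is reduced, $C_{G_{n+1}}(G_n)=1$, so $\theta_n$ is injective. Moreover $\theta_n$ restricted to $G_n$ is exactly the inner automorphism map $G_n\to\Inn(G_n)$ (which is injective as $Z(G_n)=1$). Thus I would identify $G_n\le G_{n+1}\le\Aut(G_n)$ compatibly: $G_{n+1}$ sits between $G_n$ and $\Aut(G_n)$, with $G_n$ identified with $\Inn(G_n)$. The remaining ingredient is functoriality: an inclusion $G_n\le K\le\Aut(G_n)$ of centreless groups, with $G_n$ normal in $K$, induces an inclusion $\Aut(G_n)\le\Aut(K)$ extending the given maps (this is a standard fact used in the construction of the automorphism tower — conjugation by an element of $\Aut(G_n)$ normalises $\Inn(G_n)$ hence, suitably interpreted, acts on $K$). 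Applying this with $K=G_{n+1}$, I get a chain of inclusions of the automorphism towers, so that all the $G_n$ embed into the $n$-th term (or some fixed finite term) of the automorphism tower of $G$.

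Concretely, writing $\tau_0(G)=G$, $\tau_{k+1}(G)=\Aut(\tau_k(G))$ for the automorphism tower, I claim $G_n\le\tau_n(G)$ for all $n$, proved by induction: $G_{n+1}\le\Aut(G_n)\le\Aut(\tau_n(G))=\tau_{n+1}(G)$, where the middle inclusion uses the induction hypothesis $G_n\le\tau_n(G)$ together with the functoriality just described. By Wielandt's theorem (cited in the paragraph before the proposition), there is a finite $N$ with $\tau_N(G)=\tau_{N+1}(G)=\cdots$, and this group is complete, in particular finite since $G$ is finite (each $\Aut$ of a finite group is finite). Hence $|G_n|\le|\tau_N(G)|$ is bounded independently of $n$; since the chain is strictly increasing, it must terminate after at most $\log_2|\tau_N(G)|$ steps.

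\textbf{Main obstacle.} The delicate point is the functoriality claim: given $G_n\le G_{n+1}\le\Aut(G_n)$ with $G_n=\Inn(G_n)$ normal in $G_{n+1}$, one needs a \emph{canonical} embedding $\Aut(G_n)\hookrightarrow\Aut(G_{n+1})$ that is compatible with the inclusion $G_{n+1}\le\Aut(G_n)$ — i.e. an element $\varphi\in\Aut(G_n)$ must be shown to normalise $G_{n+1}$ inside $\Aut(G_n)$ and thereby induce an automorphism of $G_{n+1}$. This is exactly the lemma underlying Wielandt's construction, and the subtlety is checking that $\varphi$ conjugates $G_{n+1}$ (not just $\Inn(G_n)$) to itself; this holds because $G_{n+1}/G_n$ is abelian and $G_n$ is characteristic in $G_{n+1}$, but one should verify that $\varphi G_{n+1}\varphi^{-1}$ and $G_{n+1}$ induce the same subgroup of $\mathrm{Out}(G_n)$, using that $\varphi$ itself lies in the normaliser appropriately — I would either cite the relevant lemma from Robinson or Rotman directly, or spell out this one-line verification, rather than reprove the whole automorphism tower theorem.
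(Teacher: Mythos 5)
There is a genuine gap. Your whole argument rests on the ``functoriality'' claim that an inclusion $G_n\le G_{n+1}\le\Aut(G_n)$ induces an embedding $\Aut(G_n)\hookrightarrow\Aut(G_{n+1})$, and this is false: the natural maps go the other way. An arbitrary $\varphi\in\Aut(G_n)$ normalises $\Inn(G_n)$ inside $\Aut(G_n)$, but it has no reason to normalise the intermediate subgroup $G_{n+1}$, and your suggested justification (``$G_{n+1}/G_n$ is abelian and $G_n$ is characteristic in $G_{n+1}$'') does not repair this -- an abelian subgroup of $\Out(G_n)$ need not be normal in $\Out(G_n)$. For instance, if $G_n=A_5\times A_5$ and $K=G_n\rtimes C_2$ is the wreath-type extension by the factor swap, then $K'=G_n$ and $C_K(G_n)=1$, but $K/\Inn(G_n)$ is a non-normal subgroup of order $2$ of $\Out(G_n)\cong D_8$, so $K$ is not normalised by $\Aut(G_n)$ and no induced map $\Aut(G_n)\to\Aut(K)$ exists. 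The correct standard fact is the reverse: if $N$ is characteristic in $H$ and $C_H(N)=1$, then \emph{restriction} gives an injection $\Aut(H)\hookrightarrow\Aut(N)$. Consequently your induction $G_{n+1}\le\Aut(G_n)\le\Aut(\tau_n(G))$ has no valid middle inclusion, and the reduction to Wielandt's theorem collapses. (Invoking Wielandt is also overkill: even in the genuine automorphism tower, $\tau_0$ need not be normal in $\tau_k$ for $k\ge2$, which is precisely why Wielandt's proof is delicate.)

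The feature of the present situation that makes a much more elementary argument work -- and which your proposal does not exploit -- is that $G_0=G_n^{(n)}$ is the $n$-th derived subgroup of $G_n$, hence characteristic (in particular normal) in \emph{every} $G_n$, not just in $G_1$. The paper's proof shows by induction on $n$ that $C_{G_n}(G_0)=1$: given $g\in C_{G_{n+1}}(G_0)$ and any $h\in G_{n+1}$, the element $[g,h]$ lies both in $G_{n+1}'=G_n$ and in the normal subgroup $C_{G_{n+1}}(G_0)$, hence in $C_{G_n}(G_0)=1$; thus $g\in Z(G_{n+1})=1$. Then conjugation embeds every $G_n$ into the fixed finite group $\Aut(G_0)$, so the orders $|G_n|$ are bounded and the strictly increasing chain terminates. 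If you want to keep your tower-flavoured outline, this inductive centraliser computation is the ingredient you must supply in place of the false functoriality lemma.
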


\begin{proof}
Note that $G_0$ is normal in $G_n$ for all $n$, since it is
the $n$th term of the derived series of $G_n$.

We prove that $C_{G_n}(G_0)=1$ for all $n$. The proof is by induction
on $n$; it holds by definition for $n=1$. So
let us assume the result for $n$ and take $g\in C_{G_{n+1}}(G_0)$.

For any $h\in G_{n+1}$, we see that $[g,h]$ centralises $G_0$, and
$[g,h]\in G_n$; so $[g,h]\in C_{G_n}(G_0)$, whence $[g,h]=1$. As this is
true for all $h\in G_{n+1}$, we have $g\in Z(G_{n+1})$. But by  
Lemma \ref{thm:reduced-integral} and the construction, this forces $g=1$.

Now this means that $G_n$ is embedded in $\mathrm{Aut}(G_0)$ for all $n$,
so $|G_n|$ is bounded by a function of $G_0$, and the sequence terminates.\qed
\end{proof}

\begin{cor}
Let $G$ be a group of order $n$ with $Z(G)=1$. Then, if $G$ is integrable,
it has an integral of order at most $n^{\log_2n}$.
\end{cor}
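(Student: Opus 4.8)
The plan is to bootstrap from Proposition~\ref{p:upper_bound}: we know that a sequence of reduced integrals starting at a centreless group $G$ must stabilize, and at each stage the group embeds in $\Aut(G_0)$; but to get the explicit bound $n^{\log_2 n}$ we need to control how the orders grow along a \emph{single} integration step, and also to reduce an arbitrary integral to a reduced one without losing too much. First I would invoke Lemma~\ref{thm:reduced-integral}: if $H$ is any integral of $G$ with $Z(G)=1$, then $H/C_H(G)$ is a reduced integral, and it is isomorphic to the subgroup of $\Aut(G)$ induced by $H$. So the smallest integral, if it is taken to be reduced (which we may assume, since $C_H(G)=Z(H)$ is an abelian direct factor that can be stripped off while the derived group is unchanged), is a subgroup of $\Aut(G)$ containing $\Inn(G)\cong G$ as its derived subgroup. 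Hence it suffices to bound $|\Aut(G)|$, or more precisely the order of a suitable subgroup of it, in terms of $n = |G|$.

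The key quantitative input is the classical bound on the order of the automorphism group of a group of order $n$: if $G$ has order $n$ and $d$ is the minimal number of generators of $G$, then $|\Aut(G)| \le n^d$, and since $d \le \log_2 n$ (each generator at least halves the index of the subgroup generated so far), we get $|\Aut(G)| \le n^{\log_2 n}$. The cleanest route: pick a minimal generating set $x_1, \dots, x_d$ of $G$; an automorphism is determined by the images of the $x_i$, each of which lies in $G$, so there are at most $n^d$ automorphisms; and $d \le \log_2 n$ because the chain $1 < \langle x_1\rangle < \langle x_1, x_2\rangle < \cdots < G$ has each term of index at least $2$ in the next. Combining, any reduced integral $H$ of $G$ satisfies $|H| \le |\Aut(G)| \le n^d \le n^{\log_2 n}$.

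The remaining point is to argue that it is harmless to restrict to reduced integrals. By Lemma~\ref{thm:reduced-integral}(a), $C_H(G) = Z(H)$; and the image of $H$ in $\Aut(G)$ is $H/C_H(G) = H/Z(H)$, which by part~(b) is again an integral of $G$ of order at most $n^{\log_2 n}$. So if $G$ is integrable at all, it has an integral (namely this quotient) of the desired size. I would phrase the corollary's proof in exactly this order: reduce to a reduced integral via the lemma; identify it with a subgroup of $\Aut(G)$; bound $|\Aut(G)|$ by $n^d$ via images of generators; bound $d$ by $\log_2 n$.

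The main obstacle is essentially bookkeeping rather than a genuine difficulty: one must be careful that passing to $H/C_H(G)$ genuinely preserves integrability (which is exactly the content of Lemma~\ref{thm:reduced-integral}(b)) and that the embedding $H/C_H(G) \hookrightarrow \Aut(G)$ is the correct object to bound — the subtlety being that this quotient need not literally be the smallest integral, but it is \emph{an} integral and that is all the statement claims. The generator-counting bound $|\Aut(G)| \le n^{\log_2 n}$ is standard and I would cite it or include the two-line argument above; no appeal to the classification or to Proposition~\ref{p:upper_bound} is actually needed for this corollary, though it is morally the finite-bound refinement of that proposition's qualitative conclusion.
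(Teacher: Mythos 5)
Your proposal is correct and matches the paper's proof: the paper likewise passes to a reduced integral (via Lemma~\ref{thm:reduced-integral} and Proposition~\ref{p:upper_bound}, which show any integral maps onto a reduced one embedding in $\Aut(G)$) and then bounds $|\Aut(G)|$ by $n^{\log_2 n}$ using exactly the generator-counting argument you give. The only slip is your parenthetical claim that $C_H(G)=Z(H)$ is a direct factor that can be ``stripped off''---it need not be a direct factor---but your actual argument uses the quotient $H/C_H(G)$, which is all that is needed.
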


\begin{proof}
$G$ can be generated by at most $\log_2n$ elements (using Lagrange's theorem,
since by induction on $m$ the group generated by $m$ independent elements
has order at least $2^m$). Now an automorphism is determined by its effect
on the generators, and each generator has at most $n$ possible images under
any automorphism.\qed
\end{proof}

\section{Groups with composition length~$2$}

In this section, we analyse groups with composition length at most $2$,
and give a procedure to determine whether or not such groups are integrable, up
to a specific question about outer automorphism groups of simple groups.

First we consider groups with composition length~$1$, and observe:

\begin{prop}
If $G$ is a finite simple group, then $G$ has an integral.
\end{prop}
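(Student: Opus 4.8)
The statement to prove is: if $G$ is a finite simple group, then $G$ has an integral. The plan is to split into two cases according to whether $G$ is abelian or not. If $G$ is abelian and simple, then $G\cong C_p$ for some prime $p$, and we already know abelian groups are integrable (indeed $C_p$ has an integral — for example the nonabelian group of order $p^3$ and exponent $p$, or the dihedral/quaternion group when $p=2$, as noted in Theorem~\ref{t:primecubed} and the surrounding discussion). So the only real content is the nonabelian case.

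So suppose $G$ is a nonabelian finite simple group. Then $G$ is perfect, i.e. $G'=G$, so $G$ is literally its own integral: take $H=G$ and note $H'=G'=G=G$. This is the observation already made in the introduction ("Any perfect group, and in particular any non-abelian simple group, is its own integral"). The whole proposition therefore reduces to a one-line observation once the trivial abelian sub-case is disposed of, and I would write it exactly that way: first handle $G\cong C_p$ by citing integrability of abelian groups (or exhibiting an explicit integral such as an extraspecial $p$-group of order $p^3$, which is an integral of $C_p$), then observe that every nonabelian simple group is perfect and hence equals its own derived subgroup.

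I do not expect any genuine obstacle here; the only thing to be careful about is not to overlook the abelian case, since "simple" in the sense used in this paper presumably includes the cyclic groups of prime order, and those are not perfect, so the "own integral" argument does not apply to them. Once that case is separated off and dispatched by the already-established fact that abelian groups are integrable, the proof is complete. If one wants a uniform statement, one could also remark that in all cases the integral can be taken finite, which is immediate since $G$ itself is finite in the perfect case and $C_p$ has a finite integral.
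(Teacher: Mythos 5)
Your proof is correct and follows essentially the same route as the paper: the paper also splits into the cyclic case (giving $D_8$ or $Q_8$ for $C_2$ and the dihedral group of order $2p$ for odd $p$) and the non-abelian case, where perfectness makes $G$ its own integral. Your handling of the abelian case by citing general integrability of abelian groups (or an extraspecial group of order $p^3$) is an acceptable minor variant of the paper's explicit dihedral examples.
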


\begin{proof}
This is easily seen by considering three cases:
\begin{itemize}\itemsep0pt
\item If $G=C_2$, then the dihedral and quaternion groups $D_8$ and $Q_8$ are
integrals of $G$.
\item If $G=C_p$, then the dihedral group of order $2p$ is an integral of $G$.
\item If $G$ is a non-abelian simple group, then it is perfect, so it is its
own integral.\qed
\end{itemize}
\label{p:simple}
\end{proof}

For groups with composition series of length $2$, we begin with a simple
observation.

\begin{lemma}
\label{thm:composition-series-perfect}
Let $G$ be a group with the property that, in every composition series
$G>N>\cdots$ for $G$, the factor group $G/N$ is a non-abelian simple group.
Then $G$ is perfect, and hence is its own integral.
\end{lemma}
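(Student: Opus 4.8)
The statement concerns a finite group $G$ such that every composition series $G \rhd N \rhd \cdots$ has $G/N$ non-abelian simple. I want to conclude $G' = G$. The natural strategy is to argue by contradiction: if $G$ is not perfect, then $G' < G$, so $G/G'$ is a nontrivial finite abelian group, hence has a nontrivial cyclic quotient of prime order. Pulling this back, there is a normal subgroup $N \lhd G$ with $G/N \cong C_p$ abelian. Now refine the series $G \rhd N$ to a full composition series of $G$: the top factor is still $G/N \cong C_p$, which is abelian and simple but not non-abelian. This directly contradicts the hypothesis. Then the final clause "hence is its own integral" is immediate, since $G' = G$ means $G$ integrates itself.

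**Key steps in order.**
First I would observe that $G$ finite guarantees $G$ has a composition series at all, so the hypothesis is not vacuous. Second, I would suppose $G$ is not perfect and extract the normal subgroup $N$ of prime index as above, using that a nontrivial finite abelian group surjects onto some $C_p$. Third, I would invoke the Jordan–Hölder setup: any normal subgroup of a finite group can be refined to a composition series (refine $N$ down to $1$ through a composition series of $N$), producing a composition series of $G$ whose first factor is $G/N$. Fourth, I would note $G/N \cong C_p$ is simple and abelian, contradicting the assumed property that the top factor in *every* composition series is non-abelian simple. Hence $G' = G$.

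**Main obstacle.**
There is essentially no obstacle here — this is a short formal argument. The only thing to be careful about is the logical shape of the hypothesis: it quantifies over *all* composition series, so to derive a contradiction it suffices to exhibit *one* composition series violating the conclusion, which is exactly what the prime-index normal subgroup gives. One should also make sure the hypothesis is about the first factor $G/N$ of the series (not an arbitrary factor); the statement as written says "the factor group $G/N$", i.e. the top factor, so extracting a single bad top factor is exactly the right move. I would phrase the proof in two or three sentences along these lines.
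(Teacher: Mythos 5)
Your argument is correct and is essentially identical to the paper's proof: the paper also argues that a non-perfect $G$ has a normal subgroup of prime index, takes $G>N$ as the start of a composition series, and derives the contradiction from the abelian top factor. Your extra care about the quantifier over all composition series and about the existence of a composition series (finiteness of $G$) is sound but adds nothing beyond the paper's two-line argument.
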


\begin{proof}
For if $G$ is not perfect, then it has an abelian quotient, and hence a normal
subgroup with quotient $C_p$. Taking this as the start of a composition series
gives the result.\qed
\end{proof}

\medskip

Now let $G$ be a group with composition series $G>N>\{1\}$. By
Lemma \ref{thm:composition-series-perfect}, we
may suppose that $G/N\cong C_p$ for some prime $p$.

\subparagraph{Case 1:} $N\cong C_q$ for some prime $q$. There are two
possibilities:
\begin{itemize}\itemsep0pt
\item $G=C_p\times C_q$. Then $G$ is abelian, and so has an integral.
\item $p\mid q-1$ and $G$ is non-abelian. Then $G$ does not have an integral.
For suppose that $H$ is an integral of $G$. Since $N=G'$, we see that $N$ is
normal in $H$. The automorphism group of the cyclic group $N$ is abelian,
and so $H'$ acts trivially on $N$. This contradicts the fact that $H'=G$ and
$G$ induces a group of order $p$ of automorphisms of $N$.
\end{itemize}

\subparagraph{Case 2:} $N$ is a non-abelian simple group. Again there are two
subcases:
\begin{itemize}\itemsep0pt
\item If $C_p$ induces the trivial outer automorphism of $N$, then we can 
change the generator by an element of $N$ so that it acts trivially on $N$;
then $G=N\times C_p$, and both factors have integrals, and hence so does $G$.
\item In the other case, $C_p$ is a subgroup of $\Out(N)$. If $H$ is an
integral of $G$, then $H/N$ is an integral of $G/N\cong C_p$ inside
$\Out(N)$. 

But such a subgroup may or may not exist. (For example, if 
$\Out(N)\cong D_6$ and $p=2$, then $C_2$ has an integral, but not within
$\Out(N)$, so $G$ has no integral.) Resolving this case will require some
detailed analysis of the outer automorphism groups of simple groups.
\end{itemize}

The previous discussion allows us to recover the following folklore result (see \cite{miller1}).

\begin{prop}
For every $n \ge 5$, symmetric group $S_n$ is non-integrable.
\end{prop}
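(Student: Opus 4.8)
The plan is to apply the machinery just developed for groups of composition length~$2$, specializing to $N=A_n$. For $n\ge 5$, the alternating group $A_n$ is a non-abelian simple group, and $S_n$ has composition series $S_n > A_n > \{1\}$ with $S_n/A_n\cong C_2$. So we are in Case~2 above, and moreover we are in its second subcase: the generator of $C_2$ (any transposition) induces a non-trivial outer automorphism of $A_n$. Indeed, a transposition is not an inner automorphism of $A_n$ (it does not preserve cycle type within $A_n$), so $C_2$ embeds as a subgroup of $\Out(A_n)$.

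The key step is then to invoke the criterion from Case~2: if $H$ were an integral of $S_n$, then $A_n = S_n'$ is characteristic in $S_n$, hence normal in $H$, and $H$ would act on $A_n$ by conjugation, inducing $H/C_H(A_n)\hookrightarrow\Aut(A_n)$. Since $A_n=H'$ and $S_n = H'$ too (wait — here $G=S_n=H'$), we have $G=S_n$ embedding in $\Aut(A_n)$ via conjugation, so the image $\overline H$ of $H$ in $\Aut(A_n)$ satisfies $\overline H' = \overline G \cong S_n/(S_n\cap C) $ where $C=C_H(A_n)$. But $C\cap H' = C_H(A_n)\cap S_n \le C_{S_n}(A_n)=Z(S_n)=1$ (for $n\ge 3$), so $\overline G\cong S_n$. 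Then $\overline H/\overline{A_n}$ is an integral of $C_2$ sitting inside $\Aut(A_n)$, equivalently inside $\Out(A_n)$. For $n\ne 6$, $\Out(A_n)\cong C_2$, which has no integral at all inside it (a group of order $2$ cannot be a derived subgroup of a subgroup of a group of order $2$); for $n=6$, $\Out(A_6)\cong C_2\times C_2$, which likewise contains no integral of $C_2$ since every subgroup is abelian and a $C_2$ cannot be the derived subgroup of an abelian group. Either way we reach a contradiction, so $S_n$ has no integral.

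I would structure the write-up as: first cite Proposition~\ref{p:cc} or the Case~2 analysis to reduce to the claim that $C_2$ has no integral inside $\Out(A_n)$; then record that $\Out(A_n)$ is cyclic of order $2$ for $n\ge 5$, $n\ne 6$, and is a Klein four-group for $n=6$; then observe that in either case the group has composition length $\le 2$ with all factors abelian, in fact is abelian, hence has no subgroup with derived group of order $2$. A cleaner alternative, avoiding the $n=6$ case split entirely, is to apply Proposition~\ref{p:cc} directly: I could try to exhibit a characteristic cyclic non-central subgroup of $S_n$ — but $S_n$ has no such subgroup for general $n$, so this does not work, and the $\Out$ route is the right one.

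The main obstacle is essentially bookkeeping rather than depth: one must be careful that the reduction from "integral of $S_n$" to "integral of $C_2$ inside $\Out(A_n)$" is valid, which requires knowing $C_{S_n}(A_n)=1$ so that $A_n = S_n'$ is genuinely recovered faithfully, and requires handling $n=6$ where $\Out$ is larger than usual. Both points are standard facts about symmetric groups, so the proof is short; the only genuine content is the (well-known) computation of $\Out(A_n)$, which I would simply quote.
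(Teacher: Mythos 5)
Your proof is correct and follows essentially the same route as the paper: reduce via the composition-length-$2$ analysis to the non-existence of an integral of $C_2$ inside $\Out(A_n)$, which fails because $\Out(A_n)$ is abelian (of order $2$, or $4$ when $n=6$), so no subgroup of it has derived group $C_2$. (One small slip: conjugation by a transposition \emph{does} preserve cycle type in $A_n$; the correct reason it gives an outer automorphism is that $C_{S_n}(A_n)=1$ for $n\ge5$, so no element of $A_n$ induces the same automorphism.)
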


\begin{proof}
For every $n \ge 5$, the symmetric group $S_n$ has composition length $2$ with series
$S_n>A_n>\{1\}$. We recall that $\mathrm{Out}(A_6) \cong C_2 \times C_2$ and $\mathrm{Out}(A_n) \cong C_2$ for all $n\ne 6$.
Since $T:=A_n/S_n \cong C_2$ and its smallest integral $S$ is $D_8$, then $S$ is not contained in $\mathrm{Out}(A_n)$ for any $n\ge 5$,
the discussion above implies that $S_n$ is non-integrable. \qed
\end{proof}

\section{Orders of non-integrable groups}
\label{s:orders}

This section is devoted to the proof of the following theorem:

\begin{theorem}
Let $n$ be a positive integer. Then every group of order $n$ is integrable if
and only if $n$ is cube-free and there do not exist prime divisors
$p$, $q$ of $n$ with $q\mid p-1$.
\label{t:orders}
\end{theorem}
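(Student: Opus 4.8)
The plan is to prove both directions of the equivalence, treating the "if" direction (sufficiency: such $n$ forces integrability) as the substantial part and the "only if" direction (necessity) as a matter of assembling the non-integrable examples already constructed in the excerpt.

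\medskip

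\textbf{The necessity direction.} Suppose $n$ is \emph{not} of the stated form. Two cases arise. If $n$ has prime divisors $p, q$ with $q \mid p-1$, then by Corollary~\ref{c:pq} there is a non-abelian (indeed centreless) group of order $pq$ which is non-integrable; padding it out by a cyclic group $C_{n/pq}$ of coprime-or-not order, we obtain a non-integrable group of order $n$ — here I would invoke Proposition~\ref{p:products}: if the $C_{n/pq}$ factor has order coprime to $pq$, part~(b) shows integrability of the product would force integrability of the $pq$-factor; and if it is not coprime one can instead work inside a Sylow argument, or more simply build the non-integrable group directly as a semidirect product having a characteristic non-central cyclic subgroup and invoke Proposition~\ref{p:cc}. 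If instead $n$ is divisible by $p^3$ for some prime $p$, then by Theorem~\ref{t:primecubed} (for odd $p$) or the classical fact that $D_8$ is non-integrable (for $p=2$), together with Proposition~\ref{p:primepower} for higher prime-power parts, there is a non-integrable group of order $p^3$ (or $p^k$, $k \geq 3$); again one forms a suitable (possibly semidirect) product of order $n$ and uses Proposition~\ref{p:cc} or Proposition~\ref{p:products} to conclude it is non-integrable. The bookkeeping here is the one mildly delicate point: one must ensure the "padding" does not accidentally create an integral, which is exactly what the characteristic-subgroup criterion of Proposition~\ref{p:cc} guarantees.

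\medskip

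\textbf{The sufficiency direction.} Now assume $n$ is cubefree and no two prime divisors $p,q$ of $n$ satisfy $q \mid p-1$; I must show \emph{every} group $G$ of order $n$ is integrable. The key structural fact is that such an $n$ forces every group of order $n$ to be \emph{nilpotent}, in fact abelian. Indeed, the condition "no $p,q \mid n$ with $q \mid p-1$" is precisely the condition (a theorem essentially due to the classification of groups whose orders force abelianness, sometimes attributed to a result on "abelian-forcing numbers") that every group of order $n$ is abelian, provided $n$ is also cubefree — cubefreeness kills the non-abelian $p$-groups of order $p^3$ and rules out the non-abelian group of order $p^2 q$ type configurations, while the divisibility condition kills metacyclic groups $C_p \rtimes C_q$. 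So the first step is to prove this classical lemma: under both hypotheses, a group of order $n$ has no non-abelian composition structure — using Sylow theory to show all Sylow subgroups are normal (hence $G$ is nilpotent, a direct product of its Sylow subgroups), and cubefreeness to show each Sylow subgroup, of order $p$ or $p^2$, is abelian. Once $G$ is known to be abelian, integrability is immediate from Guralnick's result (item~1 of the list in Section~\ref{s:exs}, $A \wr C_2$ is an integral), which completes the proof.

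\medskip

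\textbf{Where the difficulty lies.} The genuine mathematical content is entirely in the sufficiency half, and within it, in the classical lemma that the stated arithmetic conditions force every group of order $n$ to be abelian. The Sylow-normality argument is the crux: one shows by induction on $|G|$ (or by a direct counting/transfer argument) that if $p$ is the largest prime dividing $n$ then the Sylow $p$-subgroup is normal — the number of Sylow $p$-subgroups is $\equiv 1 \pmod p$ and divides $n/p^{a}$, and the hypothesis that no smaller prime $q$ divides $p-1$ (together with cubefreeness bounding the exponents) forces this number to be $1$; then one quotients and induces. This transfer-style argument, and carefully checking that the exponent bound from cubefreeness interacts correctly with the Sylow-count congruences, is the part I expect to require the most care; everything else (the abelian case, the necessity direction) is assembly of results already in hand.
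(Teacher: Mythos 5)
Your necessity direction is essentially the paper's argument and is fine in outline: for the $q\mid p-1$ case one takes the centreless non-abelian group of order $pq$ times the cyclic group of order $n/pq$ and applies Proposition~\ref{p:products}(c), and for the non-cube-free case one takes the non-integrable $p$-group of order $p^a$ ($a\ge 3$) from Theorem~\ref{t:primecubed} and Proposition~\ref{p:primepower} times any group of coprime order $n/p^a$ and applies Proposition~\ref{p:products}(b); the coprimality worry you raise does not arise, since $p^a$ is taken to be the full $p$-part of $n$.

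The sufficiency direction, however, rests on a false claim: the two arithmetic conditions do \emph{not} force every group of order $n$ to be abelian. The classical characterisation of ``abelian-forcing'' orders requires a further condition, namely that there be no primes $p,q$ with $p^2\mid n$ and $q\mid p+1$. The number $n=75=3\cdot 5^2$ satisfies your hypotheses (cube-free, and neither $3\mid 5-1$ nor $5\mid 3-1$), yet $3\mid 5+1$, so there is a non-abelian group of order $75$, namely $(C_5\times C_5)\rtimes C_3$ with $C_3$ acting as an element of order $3$ in $\GL(2,5)$ (such an element exists because $3$ divides $5^2-1$ even though it does not divide $5-1$). Your Sylow argument shows at best that the Sylow subgroup for the largest prime is normal; it cannot rule out a smaller prime $q$ acting nontrivially on an elementary abelian group of order $p^2$ via a subgroup of order dividing $p+1$. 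So reducing to Guralnick's abelian result does not cover all groups of order $n$, and the heart of the theorem is exactly the groups you have omitted. The paper's proof takes a minimal non-integrable $G$ of such order $n$, uses Burnside's transfer theorem to show $G$ is soluble, analyses the Fitting subgroup to show $G=N\rtimes H$ where $N$ is a product of elementary abelian groups of order $p^2$ carrying $\GF(p^2)$-structures, $H$ is abelian, and $H$ acts on each factor by field multiplications of order dividing $p+1$; it then \emph{constructs} an integral $K=\langle G,t\rangle$ by adjoining an involution $t$ acting as the Frobenius $x\mapsto x^p$ on each field factor and as inversion on $H$, and verifies $K'=G$. This explicit construction is the genuine content of the sufficiency direction and is entirely absent from your proposal.
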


\begin{remark}
It is a formal consequence of the statement of this theorem that non-integrable
groups of all even orders greater than $4$ exist: 
if $n$ is even and $n>4$ then either $n=2^d$ for $d\ge3$, or $n$ is divisible
by an odd prime $p$ (with $2\mid p-1$).
However, we already know this because of Corollary \ref{thm:dihedral-no-integral}, and we use this result in the proof; so we assume that $n$ is odd.
\end{remark}

\begin{remark}
It is interesting to compare this theorem with the description of numbers $n$
for which all groups of order $n$ are abelian. Since abelian groups are
integrable, this set is a subset of the set in the Theorem. It is known that
all groups of order $n$ are abelian if and only if $n$ is cube-free and there
do not exist primes $p$ and $q$ such that either
\begin{itemize}\itemsep0pt
\item $p$ and $q$ divide $n$, and $q\mid p-1$; or
\item $p^2$ and $q$ divide $n$, and $q\mid p+1$.
\end{itemize}
(This result is folklore. For a proof by Robin Chapman, see~\cite{xxxx}.) So
the two sets contain no non-cube-free integers, and coincide on squarefree
integers; but there are integers (such as $75$) for which non-abelian groups
exist but all groups are integrable.
\end{remark}

\paragraph{Proof}
We show that the conditions on $n$ in the Theorem are necessary. If $n$
is not cube-free, then $n=p^am$ where $p$ is prime, $a\ge3$, and $p\nmid m$.
By Theorem~\ref{t:primecubed} and Proposition~\ref{p:primepower},
there is a non-integrable group $P$ of order $p^a$; since $\gcd(p^a,m)=1$,
then the direct product of $P$ with any group of order $m$ is not integrable,
by Proposition~\ref{p:products}(b).
If primes $p$ and $q$ divide $n$, with $q\mid p-1$, then the
non-abelian group of order $pq$ is centreless and non-integrable by
Corollary~\ref{c:pq}, so its direct product with any group of order $n/pq$ is
non-integrable, by Proposition~\ref{p:products}(c).

So suppose that $n$ satisfies these conditions. If $n$ is even, then $n=2$ or
$n=4$; then all groups or order $n$ are abelian, and so integrable (Section
\ref{s:abel}). So we may assume from now on that $n$ is odd. Assume
(for a contradiction) that there exists a non-integrable group of order $n$;
inductively, we may suppose that $n$ is minimal subject to this. 

Our strategy is to show that $G$ has a normal subgroup $N$ which is a direct
product of elementary abelian groups of order $p^2$ for various primes $p$,
and an abelian complement $H$ which normalises each of these factors of $N$,
whose action on $N$ gives each factor (of order $p^2$, say) the structure of
the additive group of $\GF(p^2)$ such that the action of $H$ on $N$ corresponds
to multiplication by a subgroup of the multiplicative group of the finite
field of order dividing $p+1$.

When this is achieved, we let $K=\langle G,t\rangle$, where $t^2=1$, $t$
normalises $N$ and acts on each factor of order $p^2$ as the field automorphism
of order $2$ of $\GF(p^2)$ (that is, as the map $x\mapsto x^p$), and on $H$ by
inversion (the map $x\mapsto x^{-1}$). A short calculation shows that this
gives an action of $t$ by automorphism on $G$. (If the order of an element $x$
of the multiplicative group of $\GF(p^2)$ divides $p+1$, then $x^p=x^{-1}$.)
Now commutators $[t,g]$ for $g\in G$ generate $G$. (If $h\in H$, then
$[h,g]=h^{-2}$; since $|H|$ is odd, these elements generate $H$. If $P$ is a
Sylow $p$-subgroup of $N$, then the commutators $[g,t]$, for $g\in P$, 
generate a non-trivial subgroup of $P$; since $H$ acts irreducibly on $P$, the
$H$-conjugates of this generate $P$.) So $K'=G$, contradicting the assumption
that $G$ is not integrable.

So it remains to prove that $G$ has the structure described above. In
particular, we have to show that $G$ is metabelian. The Odd-Order Theorem
shows that $G$ is soluble; but this can be proved much more easily using
Burnside's transfer theorem, as follows.

Since $n$ is cube-free, the Sylow $p$-subgroups of $G$ have order $p$ or
$p^2$, and so are abelian. Let $p_1<p_2<\ldots<p_r$ be the primes dividing $n$.
Let $P_1$ be a Sylow $p_1$-subgroup of $G$. Now the automorphism group of
$P_1$ has order either $p_1(p_1-1)$ or $p_1(p_1-1)^2(p_1+1)$, the fact that
$p_1$ is the smallest prime divisor of $n$ and is odd shows that $N_G(P_1)$
acts trivially on $P_1$, and so is equal to $C_G(P_1)$. By Burnside's
Transfer Theorem, $G$ has a normal $p_1$-complement $G_1$. By induction we
construct normal subgroups $G_2,\ldots,G_r=1$ so that $G_{i+1}$ is a normal
$p_i$-complement in $G_i$, and the quotient is abelian. Thus $G$ is soluble,
as claimed.

Let $F$ be the Fitting subgroup of $G$, the largest nilpotent normal subgroup
of $G$. Since $F$ is the direct product of its Sylow subgroups, and these are
abelian, $F$ is abelian. Moreover, $F$ contains its centraliser
\cite[Theorem 6.1.3]{gorenstein}, and so $C_G(F)=F$. Thus, $G/F$ is isomorphic
to the group of automorphisms of $F$ induced by conjugation in $G$. This
group is a subdirect product of the groups induced on the Sylow subgroups
of $F$.

Now the automorphism group of $C_p$ is $C_{p-1}$; the automorphism group of
$C_{p^2}$ is $C_{p(p-1)}$; and the automorphism group of $C_p\times C_p$ is
$\GL(2,p)$, of order $p(p-1)^2(p+1)$. Now if $p$ divides $n$, then $p-1$ is
coprime to $n$ by assumption; and if $F$ contains a Sylow $p$-subgroup then
$p$ does not divide $|G/F|$. Thus the group induced on a Sylow $p$-subgroup
$P$ of $F$ is trivial if $P$ is cyclic, and has order dividing $p+1$ if
$P$ is elementary abelian of order $p^2$. Moreover, from the structure of
$\GL(2,p)$, we see that a subgroup of order dividing $p+1$ is cyclic, and
corresponds to multiplication by an element in the multiplicative group of
$\GF(p^2)$ acting on the additive group of this field.

So $G/F$ is a subdirect product of cyclic groups, and hence is abelian.
Moreover, cyclic Sylow subgroups of $F$ are central in $G$.

Now we define $N$ to be the product of the Sylow subgroups of $F$ which are
elementary abelian of prime squared order. We see that $G/N$ has a subgroup
$F/N$ (generated modulo $N$ by the cyclic Sylow subgroups of $F$) with 
$(G/N)/(F/N)\cong G/F$ abelian. Thus $G/N$ is an extension of a central
subgroup by an abelian group, and so it is nilpotent of class at most~$2$.
But then it is a direct product of its Sylow subgroups, and so is abelian.

We also note that, if $G/N$ acts trivially on a Sylow $p$-subgroup $P$ of
$N$, then by Burnside's transfer theorem, $G\cong P\times G_1$ for a subgroup
$G_1$ of order $n/p^2$; by the minimality of $n$, we have that $G_1$ is
integrable, and has order coprime to $p$, so that $G$ is integrable, a
contradiction. So the action of $G/N$ on each Sylow subgroup is non-trivial,
and each such subgroup has the structure of a finite field $\GF(p^2)$, with
the induced automorphism group isomorphic to a subgroup of the multiplicative
group, acting irreducibly.

Finally, $N$ is a normal Hall subgroup of $G$; if we take $H$ to be a Hall
subgroup for the complementary set of primes, then $H$ is a complement for
$N$ in $G$, and $H\cong G/N$, so $H$ is abelian, and we have reached our goal.
\qed

\section{Miscellanea}
\label{s:misc}

\subsection{Products, subgroups, quotients}
\label{ssec:psq}

We saw in Proposition~\ref{p:products}(a) that, if $G_1$ and $G_2$ have
integrals, then so does $G_1\times G_2$. But what about the converse? In other
words, is it possible that $G_1\times G_2$ has an integral but $G_1$ and $G_2$
do not? We saw earlier in Proposition~\ref{p:products}(b) that this is not
possible if the orders of $G_1$ and $G_2$ are coprime.

A particular case of the above question is:
can $G\times G$ be integrable when $G$ is not integrable?
We do not have an example. The smallest non-integrable group is $S_3$; and
$S_3\times S_3$ is also non-integrable. (For this group has trivial centre,
so if it were integrable it would have a reduced integral, which would be
contained in the automorphism group of $S_3\times S_3$; but this automorphism
group has order $72$, and its derived group has order $18$.)

The next case is $D_8\times D_8$. We have shown
that it has no integral of order at most $512$.

A related question would replace direct product by central product. However,
this question has a negative answer. It is well-known that $D_8\circ D_8$ is
isomorphic to $Q_8\circ Q_8$, which has the integral
$\mathrm{SL}(2,3)\circ\mathrm{SL}(2,3)$.

Regarding semidirect products, there are groups $H$ and $G$, with  $H$ acting on $G$ into two different ways, say $\phi$ and $\psi$, such that the semidirect product induced by $\phi$ is integrable but the semidirect product induced by $\psi$ is not. For one such example take $G=C_4$ and  $H=C_2$, and the two possible actions of $H$ on $G$ (the trivial action, and the action by inversion).
There are also groups $G$ and $H$ such that every semidirect product they can form fails to be integrable: take the dihedral group of order $10$ and the
Klein four group.  Finally, {\em all} semidirect products of two copies of
$C_2$ are integrable. So all possibilities involving integrals of semidirect
products can occur.

\medskip

Regarding subgroups and homomorphic images, note that the group $G=A_5$
(which is perfect, so integrable) contains $H$, the dihedral group of order
$10$, which is not integrable (by Proposition 5.1), but has a normal subgroup
$K$, the cyclic group of order $5$, such that $H$ is the normalizer of $K$ and
both $K$ and $H/K$ are
integrable. So neither integrability nor non-integrability is subgroup-closed,
and a group can have the property that all its non-trivial proper normal
subgroups are integrable with integrable quotient without itself being
integrable. Moreover, any finite group has both integrable and non-integrable
overgroups (since the alternating group is integrable but the symmetric group
is usually not).

In the reverse direction, we have the following:

\begin{prop}
Let $G$ be an integrable finite group. Then either $G$ is simple, or $G$ has
a non-trivial proper quotient which is integrable.
\end{prop}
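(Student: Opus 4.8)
The plan is to argue by contradiction: assume $G$ is integrable and simple fails, yet $G$ has no non-trivial proper quotient that is integrable. Let $H$ be a finite integral of $G$ (which exists by Theorem~\ref{t:finite}), so $H'=G$. Since $G$ is not simple, it has a non-trivial proper normal subgroup; the key is to find such a subgroup that is also \emph{characteristic} in $G$, because then it is normal in $H$ and we can pass to a quotient of $H$ whose derived subgroup is a quotient of $G$.

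The natural candidate for a characteristic subgroup is a term of the derived series of $G$, say $G^{(i)}$ for $i\ge 1$, or $Z(G)$, or the last term $G^{(\infty)}$ of the derived series. First I would treat the case $G$ not perfect: then $G^{(1)}=G'$ is a proper characteristic subgroup of $G$, hence normal in $H$, so $(H/G')'=H'/G'=G/G'$ is a non-trivial abelian — hence integrable — proper quotient of $G$, done. So we may assume $G$ is perfect. Next consider $Z(G)$: if $1\ne Z(G)$ then $Z(G)$ is a proper (since $G$ is perfect and non-trivial, $G\ne Z(G)$) characteristic subgroup, normal in $H$, and $(H/Z(G))'=G/Z(G)$; this quotient is non-trivial and we must check it is integrable — but $H/Z(G)$ integrates it, so again done, \emph{provided} $Z(G)\ne 1$. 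Thus we are reduced to $G$ perfect and centreless. Now take any minimal normal subgroup $M$ of $G$; since $G$ is centreless and perfect and not simple, $M$ is a proper non-trivial subgroup which is a direct product of isomorphic non-abelian simple groups. If $M$ happens to be characteristic (e.g.\ if it is the unique minimal normal subgroup, or the socle, or a characteristic direct factor), the same quotient argument applies: $H/M$ integrates $G/M$, which is non-trivial. More robustly, the socle $\mathrm{Soc}(G)$ is always characteristic; so $\mathrm{Soc}(G)$ is normal in $H$, and if $\mathrm{Soc}(G)\ne G$ then $G/\mathrm{Soc}(G)$ is a non-trivial proper quotient integrated by $H/\mathrm{Soc}(G)$. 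The only remaining case is $G=\mathrm{Soc}(G)$, i.e.\ $G$ is a direct product $T^k$ of a non-abelian simple group $T$; if $k\ge 2$ then $T$ itself occurs as a (not necessarily characteristic) quotient, but in fact $G/M\cong T^{k-1}$ where $M$ is a direct factor — and here I would invoke that such $M$ can be chosen so that the projection is a characteristic-enough map, or more simply: $G=T^k$ is perfect so it equals its own integral, but we need a \emph{quotient}; take $M$ a single factor $T$, which is normal in $G$; it need not be characteristic in $G$ as an abstract group, but $\mathrm{Soc}(G)=G$ tells us nothing new, so instead observe $T^k$ has $T^{k-1}$ as a quotient and $T^{k-1}$ is perfect hence integrable, so we only need that \emph{some} quotient is integrable, and any proper quotient of $T^k$ is of the form $T^j$ for $j<k$, which is perfect, hence integrable — so this case is fine too, once one notes that for this case we do not even need $H$. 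If $k=1$ then $G=T$ is simple, contrary to assumption.

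The main obstacle is the step where $G$ is perfect, centreless, and \emph{not} a direct power of a simple group — i.e.\ general centreless perfect groups whose proper characteristic subgroups might conceivably all be mapped by $\theta\colon H\to\Aut(G)$ in a way that does not obviously produce an integrable quotient. The resolution is that $\mathrm{Soc}(G)$ is characteristic and non-trivial; if it is proper we win immediately, and if $\mathrm{Soc}(G)=G$ then $G$ is a direct product of simple groups, all cases of which are handled above. So the proof essentially reduces to the trichotomy "not perfect / perfect with centre / perfect centreless", the last split further by whether the socle is proper, and in every branch the integrating group for the quotient is simply the corresponding quotient of $H$. I would present it in exactly that order, with the derived-subgroup case first as it is the cleanest illustration of the "characteristic subgroup $\Rightarrow$ normal in $H$ $\Rightarrow$ quotient is integrated by a quotient of $H$" mechanism.
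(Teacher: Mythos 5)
Your argument is correct and rests on exactly the same mechanism as the paper's proof: a non-trivial proper characteristic subgroup $N$ of $G$ is normal in the integral $H$, so $(H/N)'=G/N$ is integrable, and the residual case is a characteristically simple (here, socle-equals-whole) group, whose proper quotients are simple or perfect and hence integrable. The paper simply runs the dichotomy ``has a non-trivial proper characteristic subgroup'' versus ``characteristically simple'' in one step, whereas you reach the same endpoint through the finer split by derived subgroup, centre, and socle.
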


\begin{proof}
Let $H$ be an integral of $G$. There are two cases:
\begin{itemize}\itemsep0pt
\item[] \textbf{Case 1:} $G$ has a non-trivial proper characteristic subgroup
$N$ (one invariant under all automorphisms of $G$). Then $N$ is normal in $H$,
and we have $(H/N)'=H'/N=G/N$.
\item[] \textbf{Case 2:} $G$ is characteristically simple. In this case, $G$
is a direct product of isomorphic simple groups. So either $G$ is simple, or
it has a simple (and hence integrable) proper quotient.\qed
\end{itemize}
\end{proof}

\subsection{Relative integrals}
\label{sec:rel-int}

Let $U$ be a ``universal'' group. Can we decide, for members $G$ of some class
of subgroups of $U$, whether or not $G$ has an integral within $U$?

This question includes several special cases which have arisen elsewhere in
this paper:

\begin{itemize}\itemsep0pt
\item $U=\mathrm{Aut}(T)$ for some non-abelian simple group $T$, and $G$ is
a subgroup containing $T$.
\item $U$ is the affine group $\mathrm{AGL}(d,p)$, and $G$ is a subgroup
containing the translation group.
\item $U$ is the symmetric group $S_n$, and $G$ is a (transitive, or maybe
$2$-homogeneous) permutation group of degree $n$.
\end{itemize}

The third problem for $2$-transitive groups involves solving some special
cases of the other two, since a $2$-transitive group is either affine or
almost simple.

More generally, we could ask for a classification of 
the primitive groups $H\le S_n$ that have a given $2$-homogeneous group
$G\le S_n$ in their integrals tower. (Equivalently, find the primitive
subgroups of $S_n$ that appear as the derived group of the derived group of
the \ldots  of a given $2$-homogeneous group $G\le S_n$.)   
This question should not be difficult; examining the known list of
$2$-homogeneous groups should give a solution, since in all cases the derived
length of the soluble residual is very small.

\subsection{Infinitely integrable groups}
\label{ss:infint}

There are groups which can be integrated $n$ times (that is, which are
isomorphic to the $n$th derived group of another group), for all $n$. For
example, additive groups of rings with identity have this property: 
if $G$ is the group of upper unitriangular $(2^n+1)\times(2^n+1)$ matrices
over the ring $R$ with identity, then the $n$th derived group of $G$ consists
of unitriangular matrices with zeros in all above-diagonal positions except the
top right, and is isomorphic to the additive group $R^+$ of $R$. (We note that,
since an upper unitriangular matrix has determinant $1$, Cramer's rule shows
that its inverse can be found by ring operations alone, and so any product of
commutators in this group over any ring with identity can be computed with
ring operations.) 

We have the following result about groups with this property.

\begin{theorem}
\begin{enumerate}
\item Let $G$ be a finite abelian group; then $G$ can be integrated $n$ times for every natural number $n$ (even within the class of finite nilpotent groups). 
\item A finite group can be integrated $n$ times for every natural number $n$ if and only if it is central product of a perfect group and an abelian group.
\end{enumerate}
\end{theorem}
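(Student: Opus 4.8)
The plan is to prove the two parts separately, using the ``constant of integration'' trick to handle abelian factors and the fact that perfect groups are their own integrals.

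For part (a): a finite abelian group $G$ is a finite direct product of cyclic $p$-groups, so by Proposition~\ref{p:products}(a) it suffices to integrate a cyclic $p$-group $C_{p^a}$ infinitely often within the class of finite nilpotent groups. First I would observe that, writing $R=\mathbb{Z}/p^a\mathbb{Z}$, the group of upper unitriangular $(k+1)\times(k+1)$ matrices over $R$ is a finite $p$-group, hence nilpotent, and (by the computation quoted just before the theorem, specialising $2^n+1$ to $k+1$) its $n$th derived group is isomorphic to $R^+\cong C_{p^a}$ as soon as $k\ge 2^n$. Taking $k=2^n$ gives, for each $n$, a finite nilpotent group whose $n$th derived group is $C_{p^a}$; taking the direct product of the corresponding constructions over the cyclic factors of $G$ gives a finite nilpotent $n$th integral of $G$. (Alternatively one can iterate a single-step construction: if $H$ is a finite nilpotent group with $H'\cong A$, one can realise $H$ itself as the derived group of a larger finite nilpotent group, e.g.\ by the unitriangular construction applied to a suitable coefficient ring; but the matrix description above already yields the statement directly.)

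For part (b), the ``if'' direction is the easy one: suppose $G=P\circ A$ is a central product of a perfect group $P$ and an abelian group $A$. Since $P$ is perfect, $P=P'$, and a short computation with central products gives $G'=(P\circ A)'=P'\circ\{1\}\cdot(\text{commutators})=P$ (more carefully: $G/Z$ for a suitable central $Z$ is $P/(P\cap A)\times A/(P\cap A)$ up to the amalgamated identification, and the derived group kills the abelian direct factor, leaving $P$; I would write this out so that $G'=P\circ C$ where $C=P\cap A\le Z(P)$, which is again a central product of a perfect group and an abelian group). Hence $G$ is again a central product of a perfect group and an abelian group; iterating, $G$ lies in the same class after any number of integrations, and since $P$ alone is its own integral, one produces an $n$th integral of $G$ for every $n$ (for instance $P\circ A$ integrated once to $(P\circ A)\times C_2'$-type constructions, but cleanest is: $G$ is itself a first integral of $G'=P\circ(P\cap A)$, and this can be pushed up by repeatedly adjoining the integral of the abelian part and using that $P$ is perfect). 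The main content is the ``only if'' direction.

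For ``only if'', suppose the finite group $G$ can be integrated $n$ times for every $n$; I must show $G=P\circ A$ with $P$ perfect and $A$ abelian. The natural candidate is $P=G^{(\infty)}$, the last term of the derived series of $G$ (which stabilises since $G$ is finite), a perfect characteristic subgroup; I would aim to show $G=P\,C_G(P)$ with $C_G(P)$ abelian, which exhibits $G$ as the central product of $P$ and $A:=C_G(P)$ (the intersection $P\cap C_G(P)=Z(P)$ lying in the centre of each factor). To get this, fix $n$ large and let $H$ be a finite group with $H^{(n)}\cong G$; then $H^{(n-1)}$ is a finite integral of $G$, $H^{(n-2)}$ a second integral, and so on. The key structural input should come from iterating the BFC/centre-of-finite-index analysis of Theorem~\ref{t:finite} together with Lemma~\ref{thm:reduced-integral}: each integration step, modulo its centraliser, embeds the previous group in an automorphism group, which for a group with a large perfect part is tightly constrained. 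Concretely, I expect the argument to run: $P=G^{(\infty)}$ is perfect, and being integrated $n$ times forces the ``abelian part'' $G/P$ to survive unboundedly many integrations as an abelian group, while the perfect part is rigid; a counting or automorphism-tower argument (as in Proposition~\ref{p:upper_bound} and its corollary, applied to $P/Z(P)$) then forces $C_G(P)$ to be abelian and to cover $G/P$. \textbf{The main obstacle} will be precisely this last step: showing that $C_G(P)$ is abelian and that $G=PC_G(P)$, i.e.\ ruling out a non-abelian, non-perfect ``middle layer'' between $P$ and $G$. I expect to handle it by assuming for contradiction that $G/P$ is non-abelian or that some non-central commuting obstruction exists, and then showing that the minimal order of an $n$th integral of $G$ would have to grow without bound in a way incompatible with $G$ being integrated infinitely often inside a fixed universe of automorphism towers — mirroring the centreless analysis of Section~\ref{s:gs} but now applied to $G/Z(G)$ and its perfect radical.
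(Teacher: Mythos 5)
Your part (a) follows the paper's construction, and your identification of the perfect part as the soluble residual $P=G^{(\infty)}$ in part (b) is correct; but the ``only if'' direction of (b) --- which you yourself flag as the main obstacle --- is genuinely missing, and the automorphism-tower / growth-of-minimal-integrals strategy you sketch is not the route that closes it. The paper's key idea is much more direct. Since $\Aut(G)$ is a fixed finite group, there is a single $k$ such that $X^{(k)}$ is perfect for \emph{every} subgroup $X\le\Aut(G)$. Take $n\ge k$, a finite $H$ with $H^{(n)}=G$, and set $C=C_H(G)$. Then $H/C$ embeds in $\Aut(G)$, so $(H/C)^{(k)}$ is perfect and hence equals the soluble residual $NC/C$ of $H/C$, where $N=G^{(\infty)}$. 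Therefore $NC\ge H^{(k)}\ge H^{(n)}=G$, and Dedekind's law gives $G=G\cap NC=N(G\cap C)$ with $A:=G\cap C=Z(G)$ abelian and central. Note that the abelian complement obtained is $G\cap C_H(G)$, automatically central in $G$; it is \emph{not} your $C_G(P)$, whose abelianness is precisely the claim you could not establish. No counting of orders of $n$th integrals and no appeal to Proposition~\ref{p:upper_bound} is needed --- just the uniform bound $k$ coming from finiteness of $\Aut(G)$.

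The ``if'' direction is also under-specified: you differentiate $G=P\circ A$ but never actually build an $n$th integral. The construction requires the strengthened form of part (a), which the paper records and your write-up omits: $A$ can be realized as $R^{(n)}$ for a finite nilpotent $R$ with $A\le Z(R)$ (the $n$th derived group of the unitriangular group sits in the top-right corner, which is central). One then forms the central product $H=P\circ R$, amalgamating $P\cap A$, which lies in $Z(P)$ and in $Z(R)$, and computes $H^{(n)}=P^{(n)}R^{(n)}=PA=G$. Without the centrality of $A$ in $R$ this central product cannot be formed, so recording that refinement in part (a) is not a cosmetic point but the hinge of the ``if'' direction.
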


\begin{proof} (a) Since any finite abelian group is a direct product of finite cyclic groups, it is enough to show the result for these. Now the cyclic group $C_n$ is the additive group of the ring
$R_n=\mathbb{Z}/n\mathbb{Z}$, and so is the $n$th derived group of the group
of upper unitriangular matrices of order $2^n+1$ over $R_n$. Since the group
of upper unitriangular matrices is nilpotent, the result is proved.

Observe that, in this way, one may realize any finite abelian group $A$ as the $n$th derived group of a nilpotent group, in which $A$ is central.

\smallskip

(b) Let $G = NA$,  with $N, A$ normal subgroups, $N$ perfect, $A$ abelian, and $A\cap N =  Z(N)$, and let $n$ be a natural number. By point (a) there  exists a group $R$ such that $A = R^{(n)}$ and $A\le Z(R)$. Now, if $H$ is the central product $H = X\circ R$, then $H^{(n)} = G$.

Conversely, let $G$ be a finite group. Then there is  $k$ such that $X^{(k)}$ is perfect for every $X\le \Aut (G)$. Let $N$ be the soluble residual of $G$ (the smallest normal subgroup such that $G/N$ is soluble); then $N$ is characteristic in $G$. Suppose there exists a group $H$ such that $G = H^{(n)}$, for some $n \ge k$, and let $C = C_H(G)$. Then $NC/C$ is the soluble residual of $H/C$ and $NC \ge H^{(k)} \ge G$. Hence, $A=G\cap C\le Z(G)$ and   $G = NC\cap G = N(C\cap G) = NA$, thus proving the claim.\qed
\end{proof}

Combining this result with Theorem~\ref{t:orders}, we obtain the following
result:

\begin{cor}
For a natural number $n$, the following are equivalent:
\begin{enumerate}\itemsep0pt
\item every group of order $n$ is abelian;
\item every group of order $n$ can be integrated twice;
\item every group of order $n$ can be integrated $k$ times, for every natural
number $k$;
\item $n$ is cubefree and has no prime divisors $p$ and $q$ such that either
$q\mid p-1$, or $q\mid p+1$ and $p^2\mid n$.
\end{enumerate}
\end{cor}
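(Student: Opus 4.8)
The plan is to establish a cycle of implications, exploiting the equivalence ``(a) $\Leftrightarrow$ (d)'' as the bridge to the two preceding theorems. Condition (d) is precisely the folklore characterization of numbers $n$ for which every group of order $n$ is abelian (quoted in the Remark following Theorem~\ref{t:orders}), so (a) $\Leftrightarrow$ (d) is immediate. For the rest I would run the chain (c) $\Rightarrow$ (b) $\Rightarrow$ (a) $\Rightarrow$ (c), since (c) $\Rightarrow$ (b) is trivial (integrable $k$ times for all $k$ implies integrable twice, taking $k=2$), and (a) $\Rightarrow$ (c) is the main content.

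For (a) $\Rightarrow$ (c): if every group of order $n$ is abelian, then each such group is a finite abelian group, and part (a) of the preceding Theorem says a finite abelian group can be integrated $n$ times for every natural number $n$ (indeed within the class of finite nilpotent groups). So every group of order $n$ can be integrated $k$ times for every $k$, giving (c).

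The interesting direction is (b) $\Rightarrow$ (a): I must show that if every group of order $n$ can be integrated twice, then every group of order $n$ is abelian. Equivalently, arguing contrapositively, I want: if there is a non-abelian group of order $n$, then there is a group of order $n$ that cannot be integrated twice. Suppose $G$ has order $n$ and is non-abelian. By part (b) of the preceding Theorem, a finite group that can be integrated $k$ times for every $k$ (in particular, one that can be integrated ``enough'' times) must be a central product of a perfect group and an abelian group — but the cleaner route here is to use Theorem~\ref{t:orders} and its proof structure together with the list of non-integrable groups exhibited in Section~\ref{s:orders}. Concretely: condition (d) fails, so either $n$ is not cubefree, or there are primes $p,q \mid n$ with $q\mid p-1$, or there are primes with $q \mid p+1$ and $p^2 \mid n$. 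In the first two cases, Theorem~\ref{t:orders} already supplies a non-integrable group of order $n$ (hence one that cannot even be integrated once, let alone twice). In the remaining case — $p^2\mid n$ and $q\mid p+1$ for some primes $p,q$ — I would take the group $G_0$ of order $p^2q$ which is the semidirect product of $C_p\times C_p$ (viewed as $\GF(p^2)^+$) by a cyclic group of order $q$ acting as multiplication by an element of order $q$ in the multiplicative group of $\GF(p^2)$ (this exists since $q\mid p+1 \mid p^2-1$), and form $G = G_0 \times B$ with $|B| = n/p^2q$. This $G$ is integrable once (it is exactly a group produced by the construction in the proof of Theorem~\ref{t:orders}), so I need the sharper claim that it cannot be integrated a \emph{second} time; for that I invoke part (b) of the preceding Theorem: a finite group integrable $k$ times for all $k$ is a central product of a perfect group and an abelian group, and more to the point, I should check that $G_0$ — being soluble, non-nilpotent, with non-central derived subgroup $C_p\times C_p$ on which the complement acts faithfully — is not integrable twice, because any double integral would have third derived group $G_0$, forcing (via the argument of Proposition~\ref{p:primepower}-type reductions, or directly via Proposition~\ref{p:cc} applied to an appropriate characteristic subgroup) a contradiction with the abelianity of automorphism groups of the relevant sections.

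The main obstacle is pinning down this last case cleanly: showing that the ``field-type'' groups of order $p^2q$ (with $q \mid p+1$), which \emph{are} once-integrable, fail to be twice-integrable, so that (b) genuinely distinguishes condition (d) from the weaker cube-free/$q\nmid p-1$ condition of Theorem~\ref{t:orders}. I expect the right tool is again the observation that in a $k$-fold integral $H$ with $H^{(k)} = G_0$, the group $H^{(k-1)}$ is an integral of $G_0$, and one then analyzes the action on the characteristic subgroup $C_p \times C_p = G_0'$: its automorphism group is $\GL(2,p)$, and the subgroup of order dividing $p+1$ is cyclic (as noted in the proof of Theorem~\ref{t:orders}), so the same ``abelian automorphism group'' obstruction that killed the dihedral and $C_{pq}$ cases propagates one level up and blocks the second integration, even though it did not block the first. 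Packaging this so it is visibly the contrapositive of (b) $\Rightarrow$ (a), and citing part (b) of the preceding Theorem for the structural statement, should complete the argument.
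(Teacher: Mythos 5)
Your overall skeleton is exactly the paper's: (a)$\Leftrightarrow$(d) is the classical folklore result, (a)$\Rightarrow$(c) comes from part (a) of the preceding Theorem, (c)$\Rightarrow$(b) is trivial, and the real work is showing that failure of (d) produces a group of order $n$ that is not twice integrable, with Theorem~\ref{t:orders} disposing of the cases ``$n$ not cubefree'' and ``$q\mid p-1$''. Up to that point your argument is fine.

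The genuine gap is the remaining case $q\mid p+1$, $p^2\mid n$, which you yourself flag as ``the main obstacle'' but do not close, and the direction you sketch for it is not the one that works. Two specific problems. First, part (b) of the preceding Theorem characterizes groups integrable $k$ times for \emph{every} $k$; it cannot rule out a group being integrable exactly twice, so invoking it here proves nothing. Second, your proposed mechanism --- that the ``abelian automorphism group obstruction propagates one level up'' via $\Aut(G_0')=\GL(2,p)$ --- does not apply: $\GL(2,p)$ is not abelian, and indeed the normalizer of the nonsplit torus is nonabelian, which is precisely why $G_0=\GF(p^2)^+\rtimes C_q$ \emph{is} integrable once; Proposition~\ref{p:cc} gives no contradiction at this level. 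The argument that actually closes the case is: $G_0$ is centreless, so by Lemma~\ref{thm:reduced-integral} any integral of $G_0$ yields a reduced integral sitting inside $\Aut(G_0)$, and the analysis in the proof of Theorem~\ref{t:orders} shows the only reduced integral is $K=\langle G_0,t\rangle$ with $t$ the Frobenius map $x\mapsto x^p$, which inverts the torus. Now $N=G_0'$ is characteristic in $K$, and $K/N$ is dihedral of order $2q$ with $q$ odd, hence non-integrable by Corollary~\ref{c:dihedral}; since integrability passes to quotients by characteristic subgroups, $K$ is not integrable, so $G_0$ cannot be integrated twice. (Your phrase ``any double integral would have third derived group $G_0$'' is also off: if $H''=G_0$ then $H'$ is an integral of $G_0$, and it is $H'$, not a third derived group, that must be compared with $K$.) Without the identification of the unique reduced integral and the dihedral quotient, the implication (b)$\Rightarrow$(a) is not established.
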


\begin{proof}
We saw in Section~\ref{s:orders} the (classical) equivalence of (a) and (d).
We have observed that (a) implies (c), and trivially (c) implies (b). So
suppose that $n$ is such that every group of order $n$ can be integrated
twice. Then by Theorem~\ref{t:orders}, $n$ is cubefree and has no prime
divisors $p$ and $q$ with $q\mid p-1$. So suppose that $p$ and $q$ are
primes, with $q\mid p+1$ and $p^2\mid n$. Let $H$ be the group of order
$p^2q$ which is a semidirect product of the additive group $N$ of the field of
order $p^2$ with a subgroup of index $q$ in the multiplicative group. Then
as argued there, the only reduced integral of $G$ is obtained by adjoining
the Frobenius automorphism $t$ of the field. If $\langle G,t\rangle$ is
integrable, then so is $\langle G,t\rangle/N$. But this group is dihedral
of order $2q$, with $q$ odd, and so is not integrable.\qed
\end{proof}

One can ask if there are infinitely integrable groups. More precisely,
even if a group $G$ is not solvable, one can ask whether there exists
an infinite chain of finite groups of the form
\[
G=G_1' \le G_1=G_2' \le G_2=G_3' \le \ldots 
\]

The answer to this question is positive if we allow perfect groups, by taking
$G_i=G$ for all $i$. However, 
Neumann~\cite[Corollary 7.5]{neumann1} showed that there is no strictly 
ascending infinite sequence if $G_2$ is finitely generated; so in particular,
there is no such sequence of finite groups.
(Note that Neumann~\cite{neumann1} gives an example of an infinite ascending
sequence where $G_0$ and $G_1$ are finite and the other terms infinite.)

If we relax the conditions slightly, the following construction gives examples
of groups $G$ with subgroups $G_n$ for all natural numbers $n$, such that 
$G_0$ is finite (but $G_n$ infinite for $n>0$), and other examples where all
$G_n$ are finite but the second condition is weakened to the pair of conditions
\begin{itemize}\itemsep0pt
\item $G_n'\ge G_{n-1}$ for $n>0$,
\item $G_n^{(n)}=G_0$.
\end{itemize}

\paragraph{Construction:}

Let $R$ be a ring with identity. (We are particularly interested in the case
where $R$ is finite; but the construction works in general.) Let $I$ be the
set of dyadic rational numbers in $[0,1]$, and $I_n$ the subset of $I$ in which
the denominators are at most $2^n$. (So $I_0=\{0,1\}$.)

Our groups will be contained in the group of upper triangular matrices over 
$R$, where the index set of rows and columns is $I$. Let $e_{ij}$ be a
symbol for each $i,j\in I$ with $i<j$. Our group will be generated by elements
$xe_{ij}$ for all such $i,j$ and all $x\in R$; the relations are
\begin{itemize}\itemsep0pt
\item $xe_{ij}\cdot ye_{ij}=(x+y)e_{ij}$;
\item for $i<j<k$, $[xe_{ij},ye_{jk}]=xye_{ik}$;
\item if $j\ne k$ and $i\ne l$, then $[xe_{ij},ye_{kl}]=1$.
\end{itemize}

Think of $xe_{ij}$ as the matrix $I+xE_{ij}$, where $E_{ij}$ has entry $1$ in
the $(i,j)$ position and $0$ elsewhere.

Let $G_n$ be the subgroup generated by $xe_{ij}$ for $i,j\in I_n$, and
$H_d$ be generated by $x_{ij}$ with $j-i\ge d$. The groups $G_n$ are
isomorphic to the group of $(2^n+1)\times(2^n+1)$ strictly upper triangular
matrices over $R$, and so are finite if $R$ is finite. The groups $H_d$
are infinite if $d<1$; $G_0=H_1$ is isomorphic to the additive group of $R$.

Moreover, we see that
\begin{eqnarray*}
&& G_n'=G_n\cap H_{1/2^{n-1}}\ge G_{n-1};\\
&& H_d'=H_{2d},
\end{eqnarray*}
with the convention that $H_d=\{1\}$ if $d>1$.

Hence the chains $(H_{1/2^n})$ and $(G_n)$ of groups have the properties
claimed.

\section{Infinite groups}
\label{s:infinite}

We have rather less to say about infinite groups.

The definition of integral applies equally to finite and infinite groups.
Several of our results (Lemma~\ref{l:const}, Proposition~\ref{p:cc},
Proposition~\ref{p:products}(a) and (c), and Proposition~\ref{p:complete})
also apply to infinite groups.

Following the first part of the proof of Theorem~\ref{t:finite}, we show:

\begin{prop}
Let $G$ be finitely generated. If $G$ has an integral, then it has a finitely
generated integral.
\end{prop}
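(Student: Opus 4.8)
The plan is to mimic exactly the opening reduction in the proof of Theorem~\ref{t:finite}, which is already written to work for an arbitrary integral $H$ of $G$. Suppose $G$ is finitely generated, say $G=\langle g_1,\ldots,g_s\rangle$, and let $H$ be any integral of $G$, so $H'=G$. Since each generator $g_i$ lies in $H'$, it is a product of finitely many commutators of elements of $H$; collecting all the elements of $H$ that appear in these (finitely many) commutator expressions gives a finite subset $\{h_1,\ldots,h_r\}$ of $H$. Let $H_0=\langle h_1,\ldots,h_r\rangle$. By construction every $g_i$ is a product of commutators of elements of $H_0$, so $G=\langle g_1,\ldots,g_s\rangle\le H_0'$; and $H_0'\le H'=G$ since $H_0\le H$. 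Hence $H_0'=G$, so $H_0$ is an integral of $G$, and it is finitely generated by construction.

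The key steps, in order, are: (1) use finite generation of $G$ to fix a finite generating set; (2) express each generator as a finite product of commutators in $H$, using $H'=G$; (3) let $H_0$ be the subgroup of $H$ generated by the (finitely many) entries of those commutators; (4) check the two inclusions $G\le H_0'$ and $H_0'\le G$ to conclude $H_0'=G$. This is essentially verbatim the first paragraph of the proof of Theorem~\ref{t:finite}, with the single added observation that when $G$ itself is finitely generated one does not even need $G$ finite to run the argument — the reduction step never used finiteness of $G$, only that $G=H'$ has a finite generating set.

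There is really no obstacle here; the statement is the ``finitely generated'' analogue of the reduction already carried out, and the only thing to be careful about is that the commutators witnessing $g_i\in H'$ may involve elements of $H$ of infinite order or of unbounded order, but that is irrelevant since we only need finitely many of them to land inside a finitely generated subgroup. Thus the proof is short and self-contained, and the write-up can simply refer back to the first paragraph of the proof of Theorem~\ref{t:finite}.
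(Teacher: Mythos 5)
Your proposal is correct and is essentially the paper's own proof: the paper likewise writes each generator of $G$ as a product of commutators of elements of $H$ and takes $H_0$ to be the subgroup generated by the finitely many elements appearing in those commutators. Your explicit verification of the two inclusions $G\le H_0'$ and $H_0'\le H'=G$ just spells out what the paper leaves implicit.
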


\begin{proof}
Suppose that $H$ is an integral of $G$. Take a finite generating set for $G$,
and write each generator as a product of commutators of elements of $H$.
Then the finite set of elements involved in these commutators generate a
subgroup of $H$ whose derived group is $G$.\qed
\end{proof}

We mention a couple of classes of infinite groups which are integrable.

\begin{theorem}
\begin{enumerate}\itemsep0pt
\item Any abelian group is integrable.
\item Any free group is integrable.
\end{enumerate}
\end{theorem}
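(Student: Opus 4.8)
The plan is to handle the two parts separately: part (a) is a mild extension of the construction already attributed to Guralnick, while part (b) rests on one genuinely constructive ingredient. For (a), let $A$ be an arbitrary abelian group and put $H=A\wr C_2=(A\times A)\rtimes\langle t\rangle$ with $t^2=1$ interchanging the two copies of $A$. Since $H/(A\times A)\cong C_2$ is abelian, $H'\le A\times A$; and the commutators $[(a_1,a_2),t]=(a_1^{-1}a_2,\,a_2^{-1}a_1)$ exhibit inside $H'$ the ``antidiagonal'' $D=\{(c,c^{-1}):c\in A\}$, a subgroup isomorphic to $A$ because $A$ is abelian. Conversely $D$ is $t$-invariant, hence normal in $H$, and the map $A\times A\to A$, $(a_1,a_2)\mapsto a_1a_2$, has kernel $D$ and carries the $t$-action to the trivial one, so $H/D$ is abelian and $H'\le D$. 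Thus $H'=D\cong A$, so $A\wr C_2$ is an integral of $A$ whether or not $A$ is finite; in particular this also re-proves integrability of $F_0=1$ and $F_1\cong\mathbb{Z}$.

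For (b), let $F$ be free of rank $\kappa\ge 2$ (ranks $\le 1$ being covered by (a)). The strategy is to seek an automorphism $\varphi$ of $F$ whose induced map $\bar\varphi$ on the free abelian group $\bar F=F/F'$ has $\bar\varphi-\mathrm{id}$ surjective, and then set $H=F\rtimes_{\varphi}\langle t\rangle$ with $t$ of infinite order. Granting such a $\varphi$: since $H/F\cong\mathbb{Z}$ is abelian, $H'\le F$; on the other hand $H'$ contains $F'$ together with all $[f,t]$, and the image of $[f,t]$ in $\bar F$ is $(\bar\varphi-\mathrm{id})\bar f$, so these images run over all of $\bar F$. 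Hence $H'F'=F$, and since $H'\ge F'$ this forces $H'=F$, so $H$ is an integral of $F$.

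It remains to produce $\varphi$, which is the one place needing care. If $\kappa=r$ is finite, take for $\bar\varphi$ the companion matrix of $x^r-x-1\in\mathbb{Z}[x]$: up to sign, $\det\bar\varphi$ is the constant term and $\det(\bar\varphi-\mathrm{id})$ is the value at $1$, so both are $\pm1$, both matrices lie in $\GL_r(\mathbb{Z})$, and every element of $\GL_r(\mathbb{Z})$ is induced by an automorphism of $F_r$. If $\kappa$ is infinite, partition a free basis $X$ into two-element sets $\{a_i,b_i\}$ (possible since $|X|$ is infinite) and let $\varphi$ act on each free factor $\langle a_i,b_i\rangle$ by the Nielsen automorphism $a_i\mapsto b_i$, $b_i\mapsto a_ib_i$; this is an automorphism of $F$, and on the corresponding $\mathbb{Z}^2$-summand of $\bar F$ the map $\bar\varphi-\mathrm{id}$ is $\pmatrix{-1&1\cr1&0\cr}$, of determinant $-1$, so $\bar\varphi-\mathrm{id}$ is bijective on $\bar F$. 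The main obstacle is thus exactly this uniform choice of $\varphi$ with $\bar\varphi-\mathrm{id}$ invertible; once it is in hand, the rest of the theorem reduces to the routine observation ``the quotient is abelian so $H'$ is small, but the visible commutators already make it everything,'' plus the check that the degenerate ranks $0$ and $1$ genuinely fall under part (a).
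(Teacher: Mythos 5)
Your proof is correct. Part (a) is exactly the paper's argument (Guralnick's wreath product $A\wr C_2$ with derived group the antidiagonal $\{(a,a^{-1}):a\in A\}$), with the two inclusions checked in the same way. Part (b), however, takes a genuinely different route. The paper splits into two cases: for infinite rank $\alpha$ it shows $|F_\alpha'|=\alpha$ and invokes Nielsen--Schreier to conclude $F_\alpha'\cong F_\alpha$, so $F_\alpha$ is its own integral; for finite rank it quotes Nielsen's computation that $(C_2*C_{n+1})'\cong F_n$. You instead build a single uniform integral $H=F\rtimes_\varphi\mathbb{Z}$ from an automorphism $\varphi$ whose abelianized action satisfies that $\bar\varphi-\mathrm{id}$ is surjective on $F/F'$; the verification that $H'=F$ (namely $H'\le F$ since $H/F$ is abelian, $H'\ge F'$, and $H'$ surjects onto $F/F'$ via the commutators $[f,t]$) is sound, and your choices of $\varphi$ work: the companion matrix of $x^r-x-1$ has $\det=\pm1$ and $\det(\bar\varphi-\mathrm{id})=\pm p(1)=\pm1$, every element of $\GL(r,\mathbb{Z})$ lifts to $\Aut(F_r)$, and in infinite rank the blockwise Nielsen automorphism gives $\bar\varphi-\mathrm{id}$ invertible on each $\mathbb{Z}^2$-summand. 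What each approach buys: the paper's is shorter and identifies classical integrals ($C_2*C_{n+1}$, or $F_\alpha$ itself), but leans on Nielsen's theorem on free products of cyclic groups and on the Nielsen--Schreier theorem for infinite-index subgroups; yours is self-contained modulo the surjectivity of $\Aut(F_r)\to\GL(r,\mathbb{Z})$, treats all ranks $\ge2$ by one mechanism, and produces an integral containing $F$ with cyclic quotient rather than merely a group whose derived subgroup is abstractly isomorphic to $F$.
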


\begin{proof}
(a) We follow Guralnick's proof~\cite{gural1}: if $A$ is abelian, then the
derived group of $A\wr C_2$ is the subgroup $\{(a,a^{-1}):a\in A\}$ of the
base group $A^2$ of the wreath product, which is clearly isomorphic to $A$.

\smallskip

(b) If $\alpha$ is an infinite cardinal, then $|F_\alpha|=\alpha$, so
$|F_\alpha'|\le\alpha$. The inequality cannot be strict, since if 
$\{f_i:i\in\alpha\}$ is a free generating set for $F_\alpha$, then the elements
$[f_0,f_i]$, for $i\in\alpha$, $i>0$, of the derived group are all distinct.
So $F_\alpha'\cong F_\alpha$.

For finite rank, we use the result of Nielsen~\cite{nielsen}, a special case
of which asserts that the derived group of the free product $C_{m_1}*C_{m_2}$
is a free group of rank $(m_1-1)(m_2-1)$. So the derived group of $C_2*C_{n+1}$
is $F_n$.\qed
\end{proof}

\begin{remark}
The free product of integrable groups may or may not be integrable. For example,
\begin{itemize}\itemsep0pt
\item $C_2*C_2$ is the infinite dihedral group, which is not integrable (by the
same argument as for finite dihedral groups).
\item $C_3*C_3$ is the derived group of $\mathrm{PGL}(2,\mathbb{Z})$
\cite{jonesetal,sahinetal}.
\end{itemize}
\end{remark}

In view of part (a) above and our results on abelian groups, we could ask
whether any abelian group $A$ has an integral $G$ with $|G:A|$ finite. For
example, if $A=A^2$ (that is, every element of $G$ is a square), then the
\emph{generalized dihedral group}
\[G=\langle A,t\rangle\mid t^2=1, t^{-1}at=a^{-1}\hbox{ for all }a\in A\rangle\]
is an integral of $A$ with $|G:A|=2$. But in general the answer is negative:

\begin{theorem}
There exist infinite abelian groups $A$ having no integral $G$ with 
$|G:A|$ finite.
\end{theorem}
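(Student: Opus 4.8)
The plan is to exhibit an explicit infinite abelian group $A$ which cannot be the derived subgroup of any group $G$ with $|G:A|$ finite, and the natural candidate is a direct sum of cyclic groups of distinct prime-power orders, chosen so as to violate a counting obstruction analogous to the one behind Lemma~\ref{lemma2} and Proposition~\ref{px1}. Concretely, I would take $A = \bigoplus_{p} C_{p^{k_p}}$ where $p$ ranges over all primes and the exponents $k_p$ are chosen large; or, staying closer to the $2$-group analysis already in the paper, $A = \bigoplus_{i\ge 1} C_{2^{a_i}}$ with the $a_i$ strictly increasing. The key point is that the Frattini-type characteristic subgroups force any integral to respect the decomposition, and then a finite-index constraint collides with the growth of the exponents.

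Here is the skeleton of the argument. Suppose $H' = A$ with $|H:A| = N < \infty$. First, reduce to a prime-by-prime analysis: for each prime $p$, the $p$-primary component $A_p$ of $A$ is characteristic in $A$ (being the set of elements of $p$-power order), hence normal in $H$, and $H$ acts on $A_p$ by automorphisms with $H' = A$ acting on $A_p$. Since $A$ acts on $A_p$ and $A$ is abelian, conjugation gives a homomorphism $A \to \Aut(A_p)$; the image lies in $\Aut(A_p)$, and I would show (as in Proposition~\ref{px1}) that automorphisms of $A_p$ of order coprime to $p$ act faithfully on $A_p/\Phi(A_p)$, while the relevant characteristic composition series there is fixed, so in fact the $p'$-part of $H$ centralises $A_p$. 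Combining over all $p$ and using that $A = \bigoplus_p A_p$, one extracts a Sylow-type reduction showing that the relevant action on each $A_p$ is by a $p$-group of bounded order (bounded by $N$). Then apply the inequality of Lemma~\ref{lemma2} (or its direct analogue for $C_{2^a}$ in place of elementary abelian factors) to the section of $H$ acting on $A_p$: the order of that section, which divides $N$, must grow with the exponent $a_i$ of the cyclic factor, since a commutator subgroup equal to a cyclic $p$-group of order $p^{a}$ inside a $p$-group $P$ forces $|P/Z(P)|$, hence $|P|$, to be at least roughly $p^{2\sqrt a}$. Choosing the $a_i \to \infty$ makes this incompatible with the fixed bound $N$, a contradiction.

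The main obstacle is the reduction step: showing that a finite-index integral $H$ of $A$ really does decompose the action prime-by-prime into pieces of bounded order, rather than acting on $A$ in some more entangled, "diagonal" way across infinitely many primary components. The subtlety is that $H/A$ is finite but its action on the infinite group $A$ could in principle mix the factors; one needs that the characteristic subgroup structure of $A$ (each $A_p$ characteristic, and within $A_p$ the chain of subgroups killed by $p^j$) pins the action down enough that, for each $p$, only a subgroup of $H$ of order dividing $N$ can move $A_p$ nontrivially, and that this subgroup is forced to be a $p$-group by the smallest-prime / $p'$-faithfulness argument of Proposition~\ref{px1}. Once that is in hand, the quantitative clash is exactly the computation already packaged in Lemmas~\ref{lemma1} and~\ref{lemma2}, adapted from elementary abelian to homocyclic/cyclic factors (replacing "order $2^t$" estimates by the corresponding estimates on the derived length and centre of a $p$-group whose commutator subgroup is cyclic of order $p^{a}$), and no further new idea is needed. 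I would also remark that the same construction shows more: no such $A$ has an integral that is even residually finite-by-$A$ in the appropriate sense, but the bare statement above suffices.
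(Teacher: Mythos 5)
Your choice of example is (half) right, but the quantitative heart of your argument is false, and it is false in exactly the way that matters. You claim that if a $p$-group $P$ has $P'$ cyclic of order $p^{a}$ then $|P|$ must grow with $a$ (you suggest $|P|\gtrsim p^{2\sqrt{a}}$), and you deduce that the order of the section of $H$ acting on a cyclic factor $C_{2^{a_i}}$ must grow with the exponent $a_i$. This is wrong: the dihedral group of order $2^{a+2}$ has derived subgroup cyclic of order $2^{a}$, so a cyclic $2$-group of arbitrary order has an integral of index $4$; likewise $\langle x,y\mid x^{p^{a+1}}=y^{p}=1,\ y^{-1}xy=x^{1+p}\rangle$ integrates $C_{p^{a}}$ with index $p^{2}$ for odd $p$. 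For the same reason your alternative candidate $A=\bigoplus_{p}C_{p^{k_p}}$ fails outright: the generalized dihedral group on the odd part (an involution inverting $A$) is an integral of index $2$, no matter how large the $k_p$ are. Lemma~\ref{lemma2} gives growth of the index in the \emph{rank} of $A$ (the number of cyclic factors, i.e.\ $\log_2|A/\Phi(A)|$), not in the exponents; Proposition~\ref{px1} is correspondingly about products of $m$ cyclic $2$-groups of \emph{distinct} orders with $m\to\infty$, where the distinctness is used only to kill odd-order automorphisms so that one may assume the integral is a $2$-group.

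The argument can be repaired because your second candidate, $A=\bigoplus_{i\ge1}C_{2^{a_i}}$ with the $a_i$ strictly increasing, is essentially the paper's example. The correct deduction is short: if $G'=A$ with $|G:A|=N<\infty$, choose $G$-invariant subgroups $M\le A$ with $A/M$ finite and a product of arbitrarily many cyclic $2$-groups of distinct orders; then $(G/M)'=A/M$ while $|G/M:A/M|=N$ is fixed, contradicting Proposition~\ref{px1}. This also disposes of the ``entangled action across primary components'' worry you raise: there is only one prime present, and the reduction to $2$-groups is already packaged inside Proposition~\ref{px1}. In short: right group, wrong inequality --- replace ``index grows with the exponent'' by ``index grows with the number of distinct cyclic factors'' and your proof becomes the paper's.
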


\begin{proof}
Following the arguments given in Section~\ref{s:abel}, we take $A$ to be the
direct product of cyclic groups of orders $2^k$ for $k\in\mathbb{N}$.

Let $G$ be a $2$-group such that $G'=A$, and suppose for a contradiction that
$|G/A|$ is finite. Then we can find subgroups $N\le A$ with $N\unlhd G$ and
$A/N$ finite and arbitrarily large. But $A/N=(G/N)'$, contradicting Proposition
\ref{px1}.\qed
\end{proof}

In fact much more can be said about integrals of infinite abelian groups;
this will be discussed in a later paper.

\section{Open problems}

We conclude with a list of problems arising from this study, some of which have
already been mentioned.

\begin{problem}
Let $N$ be the set of positive integers $n$ for which every group
of order $n$ is integrable (Theorem~\ref{t:orders}).
Does $N$ have a density? What is its density? (We note that of the integers
up to $10^8$, the number which lie in $N$ is $32261534$.)
\end{problem}

\begin{problem}
Find a good upper bound for the order of the smallest integral of an integrable
group of order $n$.
\end{problem}

\begin{problem}
True or false? For a fixed prime $p$, the proportion of groups of order $p^n$
which are integrable tends to $0$ as $n\to\infty$.
\end{problem}

\begin{problem}
\begin{enumerate}
\item[(a)]Let us call a group $G$ \emph{almost integrable} if $G\times A$ is integrable
for some abelian group $A$. We saw in Proposition~\ref{p:products}(c) that,
for centreless groups, ``integrable'' and ``almost integrable'' are equivalent.
Which groups with non-trivial centre are almost integrable? (At the end of
Section \ref{s:exs}, we noted that $C_2 \times D_8$ is integrable, so $D_8$
is almost integrable, but not integrable.)
\item[(b)] Does there exist a finite non-integrable group $G$, such that $G\times G$ is integrable? In particular, is
$D_8\times D_8$ integrable? (As noted in Subsection \ref{ssec:psq} it has no integral of order less
than $512$.) 
\end{enumerate}
\end{problem}

\begin{problem}
For the three cases mentioned in Subsection \ref{sec:rel-int} decide,
for members $G$ of some class of subgroups of $U$, whether or not $G$ has an integral within $U$. In the context of the discussion following Lemma \ref{thm:composition-series-perfect}, especially Case 2, we are particularly interested in the case where $U=\mathrm{Out}(T)$ for some simple group $T$ and $G$ is cyclic.
\end{problem}

\begin{problem}
For which finite non-abelian groups $G$ is it true that, for all finite groups
$H$ with $G'=H'$, it holds that $H$ is integrable if and only if $G$ is? (All
finite abelian groups have this property, but it fails for $D_8$ and $Q_8$.)
\end{problem}

\begin{problem}
One difficult problem is relating integrability of $G$ to that
of $G/Z(G)$, or indeed $G/Z$ for any central subgroup $Z$ of $G$. Is there
a cohomological tool that would help? One direction is trivial: if $G$ has
an integral then so does $G/Z(G)$.

Let us suppose that $Z(G)\cong C_p$; put $H=G/Z(G)$. Then $G$ is ``described''
by a cohomology class $\gamma$ in the second cohomology group (or extension
group) $H^2(H,C_p)$. Suppose that $H$ has an integral $K$. Does cohomology
provide a tool to decide whether $\gamma$ is the restriction to $H$ of a
class $\delta\in H^2(K,C_p)$? If so, then the corresponding extension is
an integral of $G$.
\end{problem}

\begin{problem}
Which integrable infinite groups $G$ have an integral $H$ such that the index
of $G$ in $H$ is finite?
\end{problem}

\begin{problem}
let $G$ be a locally finite group which is \emph{locally integrable} (that is,
every finite subset is contained in an integrable subgroup). Must $G$ be
integrable? If so, must there be a locally finite integral?
\end{problem}

%\begin{problem}
%Classify the groups $G$ with trivial centre   such that the equation $X'=G$ has exactly one reduced solution. 
%\end{problem}
%
%{\color{red} I am not sure this is such a good problem}

\begin{problem}
Let $X$ be a $n$-set and $S_n$ the symmetric group on $X$. Does there exist an equivalence relation $\rho$ on $X$ such that the group $G\le S_n$ of all permutations that preserve $\rho$ is integrable in $S_n$? We know this is false for the identity or universal relations; is it false for all equivalence relations?

Such a group $G$ is a direct product of wreath products $S_a\wr S_b$ of
symmetric groups. As a preliminary step, we could ask whether a direct
product of symmetric groups can be integrable.
\end{problem}

\begin{problem}
Determine whether the following problem is undecidable: given a presentation $\langle X \mid R\rangle$ for a group $G$, is $G$ integrable? Are there decidable instances of this problem? For example, is the problem decidable for one-relator groups?
\end{problem}

\begin{problem}
It is known that the infinite finitely presented Thompson groups $T$ and $V$ are simple. On the
other hand, Thompson's group $F$ has simple commutator subgroup, but is not itself simple. Is the group $F$ integrable?
\end{problem}

\begin{problem}
\begin{enumerate}\itemsep0pt
\item
The class of all integrals of a given variety $V$ of groups is a variety of groups $W$. Given a base of identities for $V$, is it possible to find a base of identities for $W$?
\item Let $G$ be a finite integrable group, and $W$ the variety of integrals
of groups in $V=\mathrm{Var}(G)$. Is there an integral $H$ of $G$ such that
$W=\mathrm{Var}(H)$?
\end{enumerate}
\end{problem}

Related to the previous problem we have the following. 

\begin{problem}	
Is it possible to classify the finite sets  $A\subseteq F_2$, the $2$-generated free group, such that the group $$\langle a,b\mid w(a,b)=1=w(b,a) \ (w\in A)\rangle$$ satisfies $w(x,y)=1$, for all $w\in A$?  
\end{problem}

\begin{problem}
Every group in a variety of abelian groups has integral. Are there other varieties with this property?
\end{problem}

\begin{problem}
For any integrable group there is a smallest integral; how many different integrals of smallest order can there be? 
\end{problem}

\begin{problem}
Is it true that no Coxeter group with connected diagram, apart from $C_2$, is integrable?
\end{problem}

\begin{problem}
Produce some algorithms and effective GAP code to find integrals of a given group. 
\end{problem}

\begin{problem}
As there is a classification of the groups in which all subgroups are normal,  classify the groups in which all subgroups are integrable.
\end{problem}

\begin{problem}
It makes sense to adapt the integrability concept to Lie algebras via the derived subalgebra.
Is it true that
a Lie algebra is integrable if and only if the corresponding Lie group is?
\end{problem}

\begin{problem}
Is there a ring theoretic analogue of the results in this paper, now taking $[a,b]=ab-ba$?
\end{problem}

Observe that in general, the commutators of all pairs of elements in a ring form a subring, but they do not necessarily form an ideal \cite{mcleod}. Therefore the ring theory literature considers the commutator as the ideal generated by all pairs $[a,b]$. Nevertheless, they form a right ideal if and only if they form a left ideal: $(ab-ba)c = a(bc-cb) + (ac)b-b(ac)$.

\section*{Acknowledgments}
The first, second and fourth authors gratefully acknowledge the support of the 
Funda\c{c}\~ao para a Ci\^encia e a Tecnologia  (CEMAT-Ci\^encias FCT project UID/Multi/04621/2013).

The third and fourth authors are members of the Gruppo Nazionale per le Strutture Algebriche, Geometriche e le loro Applicazioni (GNSAGA) of the Istituto Nazionale di Alta Matematica (INdAM). The fourth author gratefully acknowledges the support of the 
Funda\c{c}\~ao de Amparo \`a Pesquisa do Estado de S\~ao Paulo 
(FAPESP Jovens Pesquisadores em Centros Emergentes grant 2016/12196-5),
of the Conselho Nacional de Desenvolvimento Cient\'ifico e Tecnol\'ogico (CNPq 
Bolsa de Produtividade em Pesquisa PQ-2 grant 306614/2016-2).


\begin{thebibliography}{99}

\bibitem{abdollahi}
A. Abdollahi,
Integral of a group, in Proceedings of
29th Iranian International Conference on Mathematics, Amirkabir University
of Technology, Iran, 1998.

\bibitem{fm}
K. Filom and B. Miraftab,
Integral of groups,
\textit{Comm. Algebra}, \textbf{45} (2017), 1105--1113.

\bibitem{gorenstein}
D. Gorenstein, \textit{Finite Groups},  Second edition. Chelsea Publishing
Co., New York, 1980. xvii+519 pp.

\bibitem{not-commutator-realizable}
Group Properties, Not all groups are commutator-realizable, 
\\
\url{http://groupprops.subwiki.org/wiki/Derived_subgroup#Group_properties}

\bibitem{gural1}
R. M. Guralnick,
On groups with decomposable commutator subgroups. 
\emph{Glasgow Math. J.} {\bf 19} (1978), no. 2, 159--162. 

\bibitem{jonesetal}
G. A. Jones and John S. Thornton,
Automorphisms and congruence subgroups of the extended modular group,
\textit{J. London Math. Soc.} (2) \textbf{34} (1986), 26--40.

\bibitem{xxxx}
MathOverflow, Finite nonabelian groups of odd order,\\
\url{https://mathoverflow.net/questions/31553/finite-nonabelian-groups-of-odd-order/31558#31558}


\bibitem{overflow-integral}
MathOverflow, Realizing groups as commutator subgroups,\\
\url{https://mathoverflow.net/questions/85540/realizing-groups-as-commutator-subgroups}


\bibitem{mcleod}
J.B. McLeod, 
On the commutator subring.
{\em Quart. J. Math. Oxford} Ser. (2) {\bf 9} (1958), 207--209. 

\bibitem{miller1}
M. D. Miller, Existence of finite groups with classical commutator subgroup. 
\emph{J. Austral. Math. Soc. Ser. A} {\bf 25} (1978), no. 1, 41--44. 

\bibitem{neumann1}
B. H. Neumann,
Ascending derived series,
\textit{Compositio Math.} \textbf{13} (1956), 47--64.

\bibitem{nm-extra}
M. F. Newman, On a class of nilpotent groups, 
\emph{Proceedings of the London Mathematical Society}, Third Series, {\bf 10} (1960), 365--375.

\bibitem{nielsen}
J. Nielsen,
Kommutatorgruppen for det frie produkt af cykliske grupper,
\textit{Matematisk Tidsskrift} B (1948), 49--56. 

\bibitem{robinson}
D. Robinson, \textit{A Course in the Theory of Groups}, Springer, New York,
1995.

\bibitem{rotman}
J. J. Rotman, \textit{Introduction to the Theory of Groups}, Springer,
New York, 1995.

\bibitem{sahinetal}
R. Sahin, O. Bizim and I. N. Cangul,
Commutator subgroups of the extended Hecke groups $\overline{H}(\lambda_q)$,
Czechoslovak Math. J. \textbf{54} (2004), 253--259.

\bibitem{wielandt}
H.  Wielandt, 
Eine Verallgemeinerung der invarianten Untergruppen, 
\textit{Math.  Z.} \textbf{45} (1939), 209--244.

\bibitem{zassenhaus}
H. Zassenhaus, 
\textit{Lehrbuch der Gruppentheorie}, 1. Band, Teubner, Leipzig \& Berlin, 1937.

\end{thebibliography}
\end{document}